\begin{document}

\newtheorem{theorem}{Theorem}[section]
\newtheorem{definition}[theorem]{Definition}
\newtheorem{lemma}[theorem]{Lemma}
\newtheorem{proposition}[theorem]{Proposition}
\newtheorem{corollary}[theorem]{Corollary}
\newtheorem*{thmA}{Theorem A}

\theoremstyle{definition}
\newtheorem*{remarks}{Remarks}

\numberwithin{equation}{section}

\newcommand{\x}{{\bf x}}
\newcommand{\y}{{\bf y}}
\newcommand{\0}{{\bf 0}}
\newcommand{\p}{{\bf p}}
\newcommand{\q}{{\bf q}}
\newcommand{\Z}{\mathbb{Z}}
\newcommand{\R}{\mathbb{R}}
\newcommand{\C}{\mathbb{C}}
\newcommand{\N}{\mathbb{N}}

\newcommand{\Real}{\mathop{\rm Re}\nolimits}
\newcommand{\Imag}{\mathop{\rm Im}\nolimits}
\newcommand{\diam}{\mathop{\rm diam}\nolimits}

\newcommand{\eqn}{\begin{equation}}
\newcommand{\eqnend}{\end{equation}}

\title{Iteration of quasiregular tangent functions in three dimensions}
\author{A. N. Fletcher and D. A. Nicks}

\maketitle

\begin{abstract}
We define a new quasiregular mapping $T:\R^3\to\R^3\cup\{\infty\}$ that generalizes the tangent function on the complex plane and shares a number of its geometric properties. We investigate the dynamics of the family $\{\lambda T:\lambda>0\}$, establishing results analogous to those of Devaney and Keen for the meromorphic family $\{z\mapsto\lambda\tan z:\lambda>0\}$, although the methods used are necessarily original.\\

MSC: Primary 30C65; Secondary 30D05 37F10.
\end{abstract}

\section{Introduction}

In the study of iteration of meromorphic functions on the complex plane, the tangent function is one of very few examples where the Julia set has a simple form. Devaney and Keen \cite{DK} proved the following result about the dynamics of the tangent family $\tau_\lambda(z)=\lambda\tan z$.

\begin{thmA}[\cite{DK}]
If $\lambda>1$, then the Julia set $J(\tau_\lambda)$ is the real line. On the upper and lower half-planes, the iterates of $\tau_\lambda$ converge to $i\xi_0$ and $-i\xi_0$ respectively, where $\xi_0$ is the unique positive solution to
\eqn \xi_0=\lambda\tanh\xi_0. \label{defn xi0} \eqnend

If $\lambda=1$, then $J(\tau_\lambda)=\R$ and the forward orbit of any point with non-zero imaginary part converges to the parabolic fixed point at the origin.

If $0<\lambda<1$, then $J(\tau_\lambda)\subseteq\R$ is locally a Cantor set, and the Fatou set is the infinitely connected basin of attraction of the fixed point at the origin.
\end{thmA}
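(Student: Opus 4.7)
\medskip

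\noindent\textbf{Setup.} My plan hinges on the fact that $\tau_\lambda$ maps each of the open upper and lower half-planes $H^\pm=\{z:\pm\Imag z>0\}$ into itself, as is immediate from $\tan(x+iy)=(\sin 2x+i\sinh 2y)/(\cos 2x+\cosh 2y)$. Thus $\tau_\lambda$ restricts to a holomorphic self-map of the hyperbolic domain $H^+$ (and symmetrically $H^-$), and the iteration theory of Schwarz--Pick and Denjoy--Wolff will govern its dynamics there. The first step is to locate fixed points on the imaginary axis: setting $z=i\xi$ in $\lambda\tan z=z$ yields $\xi=\lambda\tanh\xi$, which has a unique positive solution $\xi_0$ exactly when $\lambda>1$, since $\tanh$ is strictly concave on $(0,\infty)$ with derivative $1$ at the origin and asymptote $1$ at infinity.

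\medskip

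\noindent\textbf{Dynamics on the half-planes.} For $\lambda>1$ I would first check attractivity of $i\xi_0\in H^+$ by computing
\[\tau_\lambda'(i\xi_0)=\frac{\lambda}{\cosh^2\xi_0}=\frac{2\xi_0}{\sinh 2\xi_0}\in(0,1),\]
then invoke Schwarz--Pick on $H^+$ to conclude that $d_{H^+}(\tau_\lambda^n(z),i\xi_0)$ is strictly decreasing in $n$, which upgrades to locally uniform convergence $\tau_\lambda^n\to i\xi_0$ on $H^+$; the statement for $H^-$ follows by reflection. For $0<\lambda\le 1$ the one-dimensional iteration $y_{n+1}=\lambda\tanh y_n$ on the invariant imaginary axis is monotone-decreasing to $0$ for every $y_0>0$, and the Denjoy--Wolff theorem then propagates this to the entire half-plane: $\tau_\lambda^n\to 0$ locally uniformly on $H^+$ (and symmetrically on $H^-$). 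In particular $H^+\cup H^-\subset F(\tau_\lambda)$ in every case.

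\medskip

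\noindent\textbf{Identifying $J(\tau_\lambda)$.} It remains only to place the real line. When $\lambda>1$ the half-plane limits $\pm i\xi_0$ are distinct, so if some $x\in\R$ lay in $F(\tau_\lambda)$ then a normal limit of $\tau_\lambda^{n_k}$ on a neighbourhood $U\ni x$ would be a holomorphic function taking the value $i\xi_0$ on $U\cap H^+$ and $-i\xi_0$ on $U\cap H^-$, which is impossible; hence $J(\tau_\lambda)=\R$. For $\lambda=1$ the two limits coincide, so I would instead argue via prepoles: the poles $\{\pi/2+k\pi\}$ lie in $J$, and since the restriction of $\tau_1$ to each interval $(k\pi-\pi/2,\,k\pi+\pi/2)$ is a diffeomorphism onto $\R$, iterating the inverse branches makes the prepoles dense in $\R$, giving $\R\subset J$ and so $J(\tau_1)=\R$; the claim about the parabolic basin is then a restatement of the half-plane convergence above. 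For $0<\lambda<1$ the origin is attracting in $\C$ with multiplier $\lambda$, so its immediate basin is an open subset of $\C$ containing a real neighbourhood of $0$ together with $H^\pm$, giving $J\subset\R$. To show that $J\cap\R$ is locally a Cantor set and that the Fatou component is infinitely connected (owing to the countable collection of prepoles puncturing the basin), my plan is to construct a contracting iterated function system from the real inverse branches of $\tau_\lambda$ on the complement of the basin of $0$ and identify $J$ with its attractor; this real-dynamics step is where I expect the main technical obstacle to lie.
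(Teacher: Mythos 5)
Theorem~A is quoted as background from Devaney and Keen~\cite{DK}; the present paper does not prove it, so there is no ``paper's own proof'' to compare against. Your sketch therefore has to be judged on its own merits, and the broad strategy is sound and in the spirit of the standard complex-dynamics argument for the tangent family. The observation that $\tau_\lambda$ preserves $H^\pm$, the fixed-point computation $\xi_0=\lambda\tanh\xi_0$, the multiplier calculation $\tau_\lambda'(i\xi_0)=\lambda/\cosh^2\xi_0=2\xi_0/\sinh 2\xi_0\in(0,1)$, and the normal-families argument showing $\R\subset J(\tau_\lambda)$ when $\lambda>1$ (since a limit function on a neighbourhood of a real point would have to equal $i\xi_0$ on one side and $-i\xi_0$ on the other) are all correct. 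One small expository gap: ``$d_{H^+}(\tau_\lambda^n(z),i\xi_0)$ is strictly decreasing'' does not by itself force convergence; you need to use that $\tau_\lambda|_{H^+}$ is not an automorphism of $H^+$ and has an interior attracting fixed point, so that the Schwarz--Pick contraction is strict and locally uniform (or simply cite the interior-fixed-point case of Denjoy--Wolff).

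The remaining pieces are incomplete, as you partly acknowledge. For $\lambda=1$ the assertion that ``iterating the inverse branches makes the prepoles dense in $\R$'' needs more than the diffeomorphism property, because the derivative of $\tan$ at the parabolic point is $1$ and there is no uniform expansion near $0$; one needs either the general fact that for a transcendental meromorphic map with infinitely many poles, $J=\overline{O^-(\infty)}$, together with an argument that no real point is in the Fatou set, or a direct analysis of the real dynamics near the parabolic point. For $0<\lambda<1$, showing that $J\cap\R$ is \emph{locally} a Cantor set and that the Fatou component is infinitely connected is the hard part of Devaney and Keen's result; the IFS idea is the right shape, but you need to identify the Cantor set as the repeller of the real map on the complement of the attracting interval, show the inverse branches are uniformly contracting there, and establish that the basin is a single Fatou component that is punctured by infinitely many prepoles. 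As written, those steps are proposals rather than proofs. In short: your argument would reconstruct the $\lambda>1$ case with minor patching, but the $\lambda=1$ and $0<\lambda<1$ cases still require the substantive work that~\cite{DK} supplies.
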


Devaney and Keen also considered complex values of $\lambda$ with $0<|\lambda|<1$. A detailed classification of the dynamics of the family $\{\lambda\tan z: \lambda\in\C,\,\lambda\ne0 \}$ has since been given by Keen and Kotus \cite{KK}.

In this article, we will consider a family of mappings on $\R^3$ that is a natural generalization of the meromorphic tangent family. This is motivated both by the results of Devaney and Keen mentioned above, and also by the higher dimensional analogues of the sine function that are constructed in \cite{BE}, \cite[\S2]{Dr} and \cite[p.111]{M2}. Bergweiler and Eremenko \cite{BE} demonstrated a seemingly paradoxical decomposition of $\R^n$ via iteration of these trigonometric analogues.

The mappings of $\R^n$ studied in \cite{BE, Dr, M2} and this paper are all quasiregular functions. We recall that a continuous function $f:U\to\R^n$ on a domain $U\subseteq\R^n$ is called \emph{quasiregular} if it belongs to the Sobolev space $W^1_{n,\mathrm{loc}}(U)$ and if there exists $K\ge1$ such that
\[ \|Df(\x)\|^n \le KJ_f(\x) \quad \mbox{for almost every } \x\in U, \]
where $\|Df(\x)\|$ is the norm of the derivative of $f$ and $J_f(\x)$ denotes the Jacobian determinant. More generally, a continuous function $f:\R^n\to\R^n\cup\{\infty\}$ is called quasiregular, or sometimes \emph{quasimeromorphic}, if the set of poles $f^{-1}(\infty)$ is discrete and if $f$ is quasiregular on $\R^n\setminus f^{-1}(\infty)$. Informally, a quasiregular function is one that maps infinitesimal spheres to infinitesimal ellipsoids with bounded eccentricity. Quasiregular maps are a generalization of analytic and meromorphic functions on the plane; see Rickman's monograph \cite{R} for many more details.

The rich theory that surrounds the dynamics of entire and meromorphic functions has prompted an investigation of the iterative behaviour of quasiregular mappings, see for example \cite{BergFJ, FN2, HMM, M1, S} and also the survey article \cite{BergCMFT}. One purpose of this article is to help address the lack of examples of quasiregular maps with well-understood dynamics. 

Identifying useful definitions for the Fatou and Julia sets of a general quasiregular function can be a complicated matter, see \cite{BergCMFT, BergFJ,  SY3} and \cite[Chapter~21]{IM} for results in this direction. The escaping set of a quasiregular map provides a more easily defined dynamically interesting set, and has been studied in \cite{BFLM, FN, N}. We recall that, for a function $f:\R^n\to\R^n\cup\{\infty\}$, the $k$th iterate $f^k$ is not defined at the poles of $f^{k-1}$. Denoting the backward orbit of infinity by
\[ O^-(\infty) = \bigcup_{k=1}^\infty f^{-k}(\infty), \]
we see that the family of iterates $\{f^k:k\in\N\}$ is only defined on $\R^n\setminus O^-(\infty)$. We thus define the \emph{escaping set} as
\[ I(f) = \{\x\in\R^n\setminus O^-(\infty) : f^k(\x)\to\infty \mbox{ as } k\to\infty\}; \]
that is, the set of points whose iterated images tend to, but never land at, infinity. This set is known to be non-empty for a large class of quasiregular mappings \cite{BFLM, FN}; in particular, $I(f)\ne\emptyset$ when $f$ is quasiregular and has infinitely many poles.

The escaping set is playing an increasingly important role in complex dynamics. Results of Eremenko \cite{E} and Dom\'{\i}nguez \cite {Do} together show that the boundary $\partial I(f)$ coincides with the Julia set $J(f)$ for any meromorphic function $f$. Further results on the escaping sets of entire and meromorphic functions can be found in \cite{BRS, Rempe, RS1, RS2, RRRS} and elsewhere. We will demonstrate that the boundary of the escaping set of the quasiregular tangent function that we construct has many of the properties typically expected of a Julia set. 


\section{Statement of results}\label{sect:results}

In Section \ref{sect:T}, we construct a quasiregular mapping $T:\R^3\to\R^3\cup\{\infty\}$ that is in many ways a generalization of the meromorphic tangent function on the complex plane. This mapping $T$ is doubly-periodic, it has a pair of omitted asymptotic values and it shares several of the geometric properties of the tangent function. Moreover, the restriction of $T$ to either the $(x,z)$-plane or the $(y,z)$-plane yields precisely the standard tangent function. See Section~\ref{sect:T} for details. We remark that the construction is valid in $\R^n$, for $n \geq 3$, but for simplicity we shall restrict to the case $n=3$.

For $\lambda>0$, we put 
\[ T_\lambda(\x)=\lambda T(\x). \]
We study the dynamics of this one-parameter family with the aim of establishing an analogue of Theorem A. We use the following definitions for continuous functions $f:\R^n\to\R^n\cup\{\infty\}$. We call ${\bf x}_0 \in\R^n$ a \emph{fixed point} of $f$ if $f({\bf x}_0)={\bf x}_0$ and define its \emph{basin of attraction} to be 
\[ \mathcal{A}({\bf x}_0) = \{\x\in\R^n : f^k(\x)\to {\bf x}_0 \mbox{ as } k\to\infty\}. \]
Furthermore, ${\bf x}_0$ is said to be an \emph{attracting} fixed point if there exists $c<1$ such that $\|f({\bf y})-{\bf x}_0\|<c\|{\bf y}-{\bf x}_0\|$ for all ${\bf y}$ in some punctured neighbourhood of ${\bf x}_0$.

Our first result shows that the iterative behaviour of $T_\lambda$ in the upper and lower half-spaces of $\R^3$ is analogous to that of $\lambda\tan z$ on the upper and lower half-planes as described in Theorem A.

\begin{theorem}\label{thm:attr}
The function $T$ has a fixed point at the origin. If $\lambda>1$, then $T_\lambda$ has attracting fixed points at $(0,0,\pm\xi_0)$ with basins of attraction
\[ \mathcal{A}((0,0,\pm\xi_0)) = \{(x,y,z):\pm z>0\}, \]
where $\xi_0>0$ is as in \eqref{defn xi0}.

If $0<\lambda<1$, then $T_\lambda$ has an attracting fixed point at the origin. Moreover, when $0<\lambda\le1$ we have $\{(x,y,z):z\ne0\}\subseteq\mathcal{A}(\0)$.
\end{theorem}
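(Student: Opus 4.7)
The strategy is to reduce everything to known one-dimensional dynamics on the $z$-axis via the coordinate-plane restriction, then lift to the full half-space using invariance together with a compactness argument based on the omitted asymptotic value.

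First I would pin down the dynamics on the $z$-axis. Since the restriction of $T$ to the $(x,z)$-plane is the planar tangent function (under the natural identification $(x,0,z)\leftrightarrow x+iz$), the $z$-axis corresponds to the imaginary axis, and the identity $\tan(iz)=i\tanh z$ gives $T(0,0,z)=(0,0,\tanh z)$. Thus $T_\lambda$ on the $z$-axis is the one-dimensional map $z\mapsto\lambda\tanh z$. Standard one-variable analysis then yields: for $\lambda>1$, the equation $\xi=\lambda\tanh\xi$ has a unique positive root $\xi_0$ with multiplier $\lambda\,\mathrm{sech}^2\xi_0 = \lambda-\xi_0^2/\lambda\in(0,1)$ (using $0<\xi_0<\lambda$), so $(0,0,\pm\xi_0)$ are fixed points attracting along the $z$-axis; for $0<\lambda<1$, the origin attracts the whole $z$-axis with multiplier $\lambda<1$; and for $\lambda=1$, orbits on the $z$-axis still converge to $\0$ since $\tanh z<z$ for $z>0$.

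Next I would establish that each open half-space $\mathcal{H}^+=\{z>0\}$ and $\mathcal{H}^-=\{z<0\}$ is forward-invariant under $T_\lambda$ and disjoint from the backward orbit of $\infty$, as direct consequences of the construction in Section~\ref{sect:T}. Indeed, the third coordinate $T_3(\x)$ should have the same sign as $z$, mirroring $\Imag\tan(x+iy)=\sinh 2y/(\cos 2x+\cosh 2y)$; and, since $T$ restricts to $\tan$ on the coordinate planes containing the $z$-axis, the poles of $T$ lie in the plane $\{z=0\}$, so the iterates $T_\lambda^k$ are defined throughout $\mathcal{H}^\pm$.

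For the basin claim with $\lambda>1$, I would use the double periodicity of $T$ in $x,y$ to pass to the quotient cylinder $C^+=(\R^2/\Lambda)\times(0,\infty)$, where $\Lambda$ is the period lattice; $T_\lambda$ descends to a continuous self-map of $C^+$. Since $(0,0,1)$ is an omitted asymptotic value of $T$, we have $T(\x)\to(0,0,1)$ as $z\to+\infty$ uniformly in $x,y$, so $T_\lambda$ extends continuously to the one-point compactification of $C^+$ at $z=+\infty$ by the constant $(0,0,\lambda)\in C^+$. Compactness of this extended domain forces $T_\lambda(C^+)$ to lie in a compact subset of $C^+$, so every forward orbit is eventually trapped in a compact set. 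Combining this with the local attraction of $(0,0,\xi_0)$ established in the first step, together with bounded quasiconformal distortion to control the derivative of $T_\lambda$ near the fixed point in directions transverse to the $z$-axis, one concludes that a neighborhood of $(0,0,\xi_0)$ is absorbing and hence every orbit in $\mathcal{H}^+$ converges to $(0,0,\xi_0)$. The case $0<\lambda\le 1$ is handled analogously, with $\0$ in place of $(0,0,\xi_0)$.

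The main obstacle I foresee is promoting ``orbits trapped in a compact set'' to ``orbits converging to the named fixed point.'' In the meromorphic setting of \cite{DK} this is handled by Schwarz--Pick contraction in the hyperbolic metric on the upper half-plane, a tool unavailable for quasiregular maps in $\R^3$. Circumventing it will require a careful analysis of the derivative of $T_\lambda$ at $(0,0,\xi_0)$ (or at $\0$) using the explicit formulas in Section~\ref{sect:T}, together with a direct argument ruling out additional forward-invariant compact sets (periodic cycles, invariant tori, etc.) in $\mathcal{H}^\pm$ that could otherwise absorb subsequential limits of the iterates.
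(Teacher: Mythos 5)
Your outline is sound up to and including the reduction to the $z$-axis, the local attraction via the embedded complex tangent, and the forward-invariance of the open half-spaces. You are also right that the genuine difficulty is promoting ``eventually trapped in a compact set'' to ``converges to the fixed point,'' and that Schwarz--Pick is unavailable. But at precisely this point your proposal stops: ``a direct argument ruling out additional forward-invariant compact sets (periodic cycles, invariant tori, etc.)'' is a description of what needs to be proved, not a proof, and nothing in the local-derivative analysis you gesture at would by itself exclude, say, an invariant annulus in $\{z>0\}$ at positive distance from the $z$-axis. Local contraction at a fixed point plus precompactness of orbits does not imply global convergence; a global mechanism is required.

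The paper supplies exactly this missing mechanism: Lemma~\ref{lem:rho} shows that the quantity $\rho(x,y,z)=M(x,y)/z$ is \emph{strictly decreasing} along orbits in the upper half-space whenever $M(x,y)\neq 0$. Combined with the lower bound $(T_\lambda)_3(x,y,z)\ge\min\{z,\xi_0\}$ (so $z_n$ is bounded below) and the upper bound $z_n\le R$ (from the omitted asymptotic value, as you note), this forces $\rho(\x_n)\to 0$, hence $M(x_n,y_n)\to 0$, so orbits are pushed onto the $z$-axis where the one-dimensional analysis takes over. This monotone ``Lyapunov''-type function is the key idea that your argument lacks. A secondary issue: your derivative calculation $\lambda\operatorname{sech}^2\xi_0<1$ only gives contraction \emph{along} the $z$-axis; the paper instead uses \eqref{teq} and \eqref{eq:tan(a+ib)} to show directly that $\|T_\lambda(\x)-(0,0,\xi_0)\|=|\lambda\tan(M+iz)-i\xi_0|\le c\,\|\x-(0,0,\xi_0)\|$, which is a genuine contraction in all three coordinates; ``bounded quasiconformal distortion'' controls eccentricity, not the size, of the derivative, so it does not deliver transverse contraction on its own.
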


Our attention turns next to the dynamics of $T_\lambda$ on the invariant $(x,y)$-plane. We recall that, for a meromorphic function with poles, the Julia set is equal to both the boundary of the escaping set and also the closure of the backward orbit of infinity. In particular, for $\tau_\lambda(z)=\lambda\tan z$ with $\lambda>1$, Theorem A shows that $\partial I(\tau_\lambda) = \overline{I(\tau_\lambda)}=\R$. We remark that by Theorem \ref{thm:attr}, 
\[ I(T_\lambda)\subseteq\{(x,y,0):x,y\in\R\} \quad \mbox{for all } \lambda>0,\] and so $\partial I(T_\lambda) = \overline{I(T_\lambda)}$.

\begin{theorem}\label{thm:O-(infty)}
If $\lambda>0$, then $\overline{I(T_\lambda)}=\overline{O^{-}(\infty)}$. The escaping set $I(T_\lambda)$ is uncountable, totally disconnected and has no isolated points.
\end{theorem}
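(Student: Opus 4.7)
The plan is to establish $\overline{I(T_\lambda)} = \overline{O^-(\infty)}$ first, then deduce the three topological properties of $I(T_\lambda)$ from this identity. The core structural inputs I would use are the double periodicity of $T$ (which gives a periodic lattice of poles in the $(x,y)$-plane), the explicit local form of $T$ near poles and near infinity, and the fact that, by Theorem~\ref{thm:attr} and the construction in Section~\ref{sect:T}, both $I(T_\lambda)$ and $O^-(\infty)$ lie inside the invariant $(x,y)$-plane.

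For $\overline{I(T_\lambda)} \subseteq \overline{O^-(\infty)}$, I would fix $\x \in I(T_\lambda)$ and show every neighborhood $U$ of $\x$ meets $O^-(\infty)$. Since $T_\lambda^n(\x) \to \infty$ and the poles of $T_\lambda$ form a uniformly discrete periodic array in the $(x,y)$-plane, each image $T_\lambda^n(\x)$ lies within a uniform distance of some pole $\q_n$. Because $\x \notin O^-(\infty)$, each iterate $T_\lambda^n$ is quasiregular in a neighborhood of $\x$; quasiregular openness combined with a blowup estimate (in the spirit of Rickman--Picard) forces $T_\lambda^n(U)$ to contain $\q_n$ for some large $n$, and a preimage of $\q_n$ in $U$ then lies in $O^-(\infty)$. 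For the reverse inclusion $\overline{O^-(\infty)} \subseteq \overline{I(T_\lambda)}$, I would take $\p$ with $T_\lambda^N(\p) = \infty$ and use the explicit local behaviour of $T_\lambda$ near a pole to build a nearby point with escaping forward orbit: starting from the pole $T_\lambda^{N-1}(\p)$, the local form of $T_\lambda$ allows a small perturbation that maps to a point near infinity, avoids the two omitted asymptotic values, and inductively lands close to (but never on) further poles, hence escapes. Pulling back through the quasiregular map $T_\lambda^{N-1}$ then produces escaping points near $\p$.

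Given the identity, the three topological properties follow. For \emph{no isolated points}, if $\x \in I(T_\lambda)$ were isolated in some neighborhood $U$, then $U$ would meet $O^-(\infty)$ at some $\p \ne \x$ by the identity, and applying the reverse inclusion at $\p$ would produce further escaping points in $U$, a contradiction. For \emph{uncountability}, the density arguments yield two distinct escaping points in every neighborhood of any escaping point; a nested Cantor-style construction, where the nested shrinking balls are chosen so that their intersections are genuinely escaping (by controlling the iterate at which each escape criterion is met), produces an injection from $\{0,1\}^{\N}$ into $I(T_\lambda)$. For \emph{total disconnectedness}, I would prove the stronger statement that $\overline{O^-(\infty)}$ itself is totally disconnected as a subset of the $(x,y)$-plane, from which $I(T_\lambda) \subseteq \overline{O^-(\infty)}$ inherits the property. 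This in turn reduces to producing, around every point of $\overline{O^-(\infty)}$ and at every scale, a simple closed curve lying in the complement of $\overline{O^-(\infty)}$ and enclosing the point; such curves are obtained by pulling back round circles from the bounded-orbit part of the $(x,y)$-plane through suitable iterates $T_\lambda^n$.

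The main obstacle is the total disconnectedness. The density inclusions and the no-isolated-points property follow from soft quasiregular-dynamics machinery, but ruling out non-trivial connected components of $\overline{O^-(\infty)}$ in the $(x,y)$-plane requires concrete geometric information about how iterated preimages of the pole lattice tile this plane, and in particular that the complement of $\overline{O^-(\infty)}$ is dense enough to provide separating curves at every scale. This step is where the explicit construction of $T$ from Section~\ref{sect:T}---rather than general quasimeromorphic principles---plays the decisive role.
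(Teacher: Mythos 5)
Your plan for total disconnectedness is fatally flawed, and this is the decisive gap. You propose to prove that $\overline{O^{-}(\infty)}$ itself is totally disconnected and then let $I(T_\lambda)$ inherit the property. But Theorem~\ref{thm:connlocus} of this paper shows that $\overline{I(T_\lambda)}=\overline{O^{-}(\infty)}$ is \emph{connected} whenever $\lambda\ge1$, so the stronger statement you aim for is simply false in that range. The set that is totally disconnected is $I(T_\lambda)$ itself, not its closure; the two are disjoint sets with the same closure, and the pole lattice together with all its preimages lives in the closure but not in $I(T_\lambda)$. Any correct argument must work with the escaping set directly. The paper does this through the itinerary map $\Phi:I(T_\lambda)\to\Pi$ of Lemma~\ref{itineraries}: if $\x,\y$ lie in the same component of $I(T_\lambda)$, then for every $n$ the points $T_\lambda^n(\x)$ and $T_\lambda^n(\y)$ lie in the same component of $I(T_\lambda)$, hence (by \eqref{I in UWp} and the fact that the $W(\p)$ are pairwise disjoint and relatively open) in the same cell $W(\p_n)$; so $\Phi(\x)=\Phi(\y)$, and injectivity of $\Phi$ forces $\x=\y$. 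Without an injective symbolic coding of this kind, your separating-curve strategy has nothing to attach to.

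Two further points. First, your argument for $\overline{I(T_\lambda)}\subseteq\overline{O^{-}(\infty)}$ is underspecified: openness of $T_\lambda^n$ alone does not force $T_\lambda^n(U)$ to engulf a pole --- the image of a small ball is small unless the map is genuinely expanding there, and this is precisely what Lemmas~\ref{lem:l(DF)}, \ref{lem:Sq} and \ref{expanding on B} are built to provide. The paper instead pulls back: it shows the nested compacta $Y_j$ shrink to a point (using expansion near poles) and each $Y_j$ already contains a preimage of a pole. Second, the easy inclusion $O^{-}(\infty)\subseteq\overline{I(T_\lambda)}$ needs none of your local pole analysis: $I(T_\lambda)$ is non-empty, hence unbounded, so $\infty\in\overline{I(T_\lambda)}$, and backward invariance does the rest. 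Likewise, uncountability is immediate from the bijection $\Phi$ onto $\Pi$; your nested Cantor construction would work but is redundant once the itinerary machinery is in place. In short, the itinerary lemma is the engine of the whole theorem, and your proposal attempts to rebuild the consequences without it, running into a genuine impossibility at the total-disconnectedness step.
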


\begin{theorem}\label{thm:sqrt2}
If $\lambda>\sqrt{2}$, then $\overline{I(T_\lambda)}=\{(x,y,0):x,y\in\R\}$. The constant $\sqrt{2}$ here cannot be replaced by any smaller value.
\end{theorem}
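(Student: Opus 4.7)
The upper bound $\overline{I(T_\lambda)} \subseteq \{(x,y,0) : x,y\in\R\}$ is immediate from Theorem~\ref{thm:attr}, since every point with $z \ne 0$ lies in the basin of one of the attracting fixed points at $(0,0,\pm\xi_0)$ (and hence is not in the escaping set). Combined with Theorem~\ref{thm:O-(infty)}, this reduces the equality in the theorem to proving that $O^{-}(\infty)$ is dense in the $(x,y)$-plane whenever $\lambda>\sqrt{2}$.

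The plan for density is an expanding-map argument. I would first establish a uniform expansion estimate for $T_\lambda$ on the $(x,y)$-plane, namely that the minimum stretching of $DT_\lambda(\x)$ is bounded below by something of the order $\lambda/\sqrt{2}$ at every $\x \in \{z=0\}$ away from the pole lattice. The constant $\sqrt{2}$ should arise directly from the explicit form of $T$ on $\{z=0\}$ built in Section~\ref{sect:T}, specifically from the bi-Lipschitz map used on a fundamental cell. With such a bound in hand, suppose for contradiction that some open disc $D \subset \{z=0\}$ avoids $O^{-}(\infty)$. Then every iterate $T_\lambda^n$ is defined and continuous on $D$, and the connected sets $T_\lambda^n(D)$ have diameter growing like $(\lambda/\sqrt{2})^n$. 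Since $T$ is doubly periodic with a pole in every fundamental cell of $\{z=0\}$, any sufficiently large connected subset of $\{z=0\}$ must contain a pole of $T_\lambda$, giving $D \cap O^{-}(\infty) \ne \emptyset$, a contradiction. This yields the first assertion.

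For sharpness, I would exhibit, for some $\lambda_* \le \sqrt{2}$, an attracting fixed point $\p$ of $T_{\lambda_*}$ lying inside the $(x,y)$-plane. The symmetries of $T$ make an axis of symmetry such as the diagonal $x=y$ the natural place to search, and on this axis $T_\lambda$ should reduce to a one-dimensional map whose multiplier at a non-trivial fixed point crosses $1$ precisely at $\lambda = \sqrt{2}$. The basin of such a $\p$ is an open subset of $\{z=0\}$ disjoint from $O^{-}(\infty)$, and so $\overline{O^{-}(\infty)}$ is a proper subset of $\{z=0\}$, preventing the conclusion of the first part.

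The main obstacle is the expansion estimate together with the matching multiplier computation at the boundary $\lambda=\sqrt{2}$: both demand a careful study of $DT$ on $\{z=0\}$ using the construction in Section~\ref{sect:T}, and the geometric reason why the threshold is exactly $\sqrt{2}$ (rather than some other constant) must emerge from the same piece of data. A secondary technical point is handling the fact that $T_\lambda^n$ need not be defined on all of $D$; as long as $D \cap O^{-}(\infty) = \emptyset$, however, each iterate really is defined on the whole of $D$, so this issue is automatically controlled by the contradiction hypothesis.
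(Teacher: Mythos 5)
Your reduction of the equality $\overline{I(T_\lambda)}=\{z=0\}$ to the density of $O^-(\infty)$ in the plane is fine, and you correctly locate the source of the constant $\sqrt{2}$ (the bound $l(DF_\lambda)\ge\lambda/\sqrt{2}$, which is Lemma~\ref{lem:l(DF)} in the paper, coming from the factor $\sqrt{x^2+y^2}$ in the explicit formula for $T$ on $\{z=0\}$). However, the forward-iteration argument you sketch for density has two genuine gaps. First, the claim that ``any sufficiently large connected subset of $\{z=0\}$ must contain a pole'' is false: the pole set $P$ is a discrete lattice, so $\R^2\setminus P$ is a connected open set, and it contains connected (indeed open connected) subsets of arbitrarily large diameter. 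Second, and independently, a pointwise lower bound $l(DF_\lambda)\ge c$ does \emph{not} imply $\diam F_\lambda(D)\ge c\,\diam D$ for a set $D$ on which $F_\lambda$ is injective: the derivative bound controls the \emph{length} of image paths, not the Euclidean distance between endpoints, and the image region can fold back on itself. The paper sidesteps both problems by running the expansion backwards: the inverse branches $S_\q$ (Lemma~\ref{lem:Sq}) satisfy $\|DS_\q\|\le\sqrt2/\lambda$ and map into the \emph{convex} Voronoi cells $W(\q)$, so one integrates along straight segments and obtains $\diam V\le(\sqrt2/\lambda)\diam U$ cleanly (Lemma~\ref{lem:diams}). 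Then the argument is: if $B(\x,\varepsilon)$ misses $O^-(\infty)$, the open set $T_\lambda^n(B(\x,\varepsilon/2))$ meets some $W(\p_n)$, whose pullback component $V_n$ containing a piece of $B(\x,\varepsilon/2)$ has diameter $\le(\sqrt2/\lambda)^n\pi\to0$, so $V_n\subset B(\x,\varepsilon)$ for large $n$ while $V_n$ contains a preimage of the pole $\p_n$, a contradiction.

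For sharpness, your underlying observation is the correct one: on the diagonal line $\{y=x\}$ the map reduces to the one-dimensional tangent map with parameter $\lambda/\sqrt{2}$ (and on $\{y=\alpha x\}$, $|\alpha|\le1$, the parameter is $\lambda/\sqrt{1+\alpha^2}$), so the threshold $\lambda=\sqrt{2}$ is precisely where the diagonal family crosses $\mu=1$. But your framing in terms of an ``attracting fixed point lying inside the $(x,y)$-plane'' is not quite right: for $1\le\lambda<\sqrt2$ the origin is \emph{not} an attracting fixed point of the restriction $F_\lambda$ (the multiplier along the coordinate axes is $\lambda\ge1$), nor is there a new attracting fixed point bifurcating off the diagonal. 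What actually works, and what the paper does in Lemma~\ref{lem:petals}, is to build a forward-invariant ``petal'' region $Q$ made up of segments of the lines $\{y=\alpha x\}$ and $\{x=\alpha y\}$ with $\lambda^2-1<\alpha^2\le 1$, on each of which the reduced map $\tau_{\lambda/\sqrt{1+\alpha^2}}$ has $0$ as a (one-dimensional) attracting fixed point. Then $Q\subseteq\mathcal{A}(\0)$, $Q\setminus\{\0\}$ is relatively open in $\R^2$ and disjoint from $\overline{I(T_\lambda)}$, which is exactly what is needed; no attracting fixed point in the ambient sense is required or available.
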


We also show how the value of $\lambda$ determines the connectedness of $\overline{ I(T_{\lambda})}$.

\begin{theorem}\label{thm:connlocus}
If $\lambda \geq 1$, then $\overline{I(T_{\lambda})}$ is connected. 
If $\lambda <1$, then $\overline{I(T_{\lambda})}$ is not connected.
\end{theorem}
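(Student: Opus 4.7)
The plan is to split on the sign of $\lambda-1$, exploiting two ingredients common to both cases: the restriction of $T_\lambda$ to either coordinate axis is just $\tau_\lambda|_\R$, so Theorem A applies along each axis; and $T$ is $\pi$-periodic in both $x$ and $y$, so information from the axes propagates to a grid in the plane.

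For $\lambda\geq 1$: Theorem A combined with the standard meromorphic identity $J=\overline{O^-(\infty)}$ gives $\R\subseteq\overline{I(T_\lambda)}$ on the $x$-axis, and by the $x\leftrightarrow y$ symmetry of $T$ also on the $y$-axis; by $\pi$-periodicity the full grid $G=\{(x,y)\in\R^2:x\in\pi\Z\text{ or }y\in\pi\Z\}$ lies in $\overline{I(T_\lambda)}$. Since $G$ is connected, it remains to show every point of $\overline{I(T_\lambda)}$ in an open cell of $G$ lies in the same component as $G$. For $\lambda>\sqrt{2}$ this is immediate from Theorem~\ref{thm:sqrt2}. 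For $1\leq\lambda\leq\sqrt{2}$ I would use $\overline{I(T_\lambda)}=\overline{O^-(\infty)}$ (Theorem~\ref{thm:O-(infty)}) and the backward-invariance of $\overline{I(T_\lambda)}$: the iterated preimages $T_\lambda^{-n}(G)$ form an increasing family inside $\overline{I(T_\lambda)}$, and I would argue inductively that each $T_\lambda^{-n}(G)$ is connected (since preimages of grid curves form arcs meeting $G$) and that their union is dense in $\overline{I(T_\lambda)}$. The inductive connectedness of these preimages, which hinges on the local branching of $T_\lambda$ within a single cell, is the main obstacle in this direction.

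For $\lambda<1$: Theorem~\ref{thm:attr} makes $\0$ attracting, so an open ball $B$ about $\0$ lies in $\mathcal{A}(\0)$ and hence $B\cap\overline{I(T_\lambda)}=\emptyset$. Theorem A says $J(\tau_\lambda|_\R)$ is a Cantor set $C$; combining this with continuity of $T_\lambda$ and openness of the 3D basin, points of the 1D Fatou set carry 2D neighbourhoods in $\mathcal{A}(\0)$, so $\overline{I(T_\lambda)}\cap(x\text{-axis})=C$ (and similarly on the $y$-axis), a totally disconnected set bounded away from $\0$. To conclude disconnectedness I would construct a Jordan curve $\gamma$ in the $(x,y)$-plane, lying in the Fatou set $\R^2\setminus\overline{I(T_\lambda)}$, that separates $\overline{I(T_\lambda)}$ into two non-empty pieces: for instance one enclosing the pole $(\pi/2,0,0)$ while excluding $(3\pi/2,0,0)$. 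Such a $\gamma$ would be pieced together from arcs passing through the basin components around the pre-fixed lattice points $(k\pi,j\pi,0)$, each of which maps in one step to $\0$ and therefore has an open 2D neighbourhood in $\mathcal{A}(\0)$. The main obstacle is showing that these basin neighbourhoods are large enough and suitably connected in the $(x,y)$-plane to admit such a separating $\gamma$; this step will likely require an explicit analysis of the shape of $\mathcal{A}(\0)\cap\{z=0\}$ near the lattice points.
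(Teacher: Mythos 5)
Your decomposition into $\lambda\geq1$ and $\lambda<1$ matches the paper, and so does the starting point in each case: for $\lambda\geq1$ you correctly put the grid $G$ inside $\overline{I(T_\lambda)}$ using the embedded copies of $\tau_\lambda$ and periodicity, and for $\lambda<1$ a separating-curve argument is indeed what is wanted. But in both halves the step you flag as ``the main obstacle'' is genuinely where the proof lives, and the paper resolves it with tools you have not invoked.

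For $\lambda\geq1$ (and the case $\lambda>\sqrt2$ is, as you note, immediate from Theorem~\ref{thm:sqrt2}), the paper does \emph{not} try to show that $T_\lambda^{-n}(G)$ is connected. Instead, let $C$ be the component of $\overline{O^-(\infty)}$ containing $G$ (so $C$ is closed and contains every pole). One shows by induction that $T_\lambda^{-n}(\infty)\subseteq C$: given $\x\in T_\lambda^{-(n+1)}(\infty)$, choose poles $\p_j$ with $T_\lambda^j(\x)\in W(\p_j)$ for $j=0,\dots,n$ (possible since $\partial W(\p)\subseteq L$ and $L\cap O^-(\infty)=\emptyset$ by \eqref{eqn:L}), take a path $\Gamma\subseteq C$ joining $\p_n$ to $\infty$ that avoids $\{(x,\pm x,0):|x|\leq\lambda/\sqrt2\}$, and apply the inverse branches $S_{\p_0}\circ\cdots\circ S_{\p_{n-1}}$ (Lemma~\ref{lem:Sq}) to $\Gamma\cup\{\infty\}$. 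This produces a connected subset of $\overline{O^-(\infty)}$ containing $\x$ and a point of $T_\lambda^{-n}(\infty)$, which is in $C$ by the induction hypothesis; hence $\x\in C$, and $\overline{O^-(\infty)}=C$. The crucial structural fact here is that each $S_\q$ is defined on $(\R^2\cup\{\infty\})\setminus\{(x,\pm x):|x|\leq\lambda/\sqrt2\}$ --- a domain containing $\infty$ --- so one can pull back a single well-chosen path rather than the whole grid. Your route of proving $T_\lambda^{-n}(G)$ connected faces real trouble: the grid lines pass through every pole, the preimage of a grid line under one branch need not rejoin $G$, and you have not identified a mechanism to glue the branches together.

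For $\lambda<1$, the missing ingredient is Lemma~\ref{lem:petals}: it gives the explicit region $Q\subseteq\mathcal{A}(\0)$, and since $\lambda/\sqrt2<\pi/4<\phi(\lambda/\sqrt2)$ when $\lambda<1$, combining with \eqref{eqn:L} gives $T_\lambda(L)\subseteq Q$ and hence $L\subseteq\mathcal{A}(\0)$, where $L$ is the lattice of diagonal lines $\{(x,\pm x+k\pi,0)\}$. The diamonds $\gamma_n\subseteq L$ with vertices $(\pm n\pi,0)$ and $(0,\pm n\pi)$ then lie in the open set $\mathcal{A}(\0)$, disjoint from $\overline{O^-(\infty)}$, and these are exactly your separating Jordan curves. (The paper in fact proves more: for any pole $\mathbf{v}$, the component $V$ of $\overline{O^-(\infty)}$ through $\mathbf{v}$ has $T_\lambda(V)$ connected, containing $\infty$, and disjoint from every $\gamma_n$, forcing $T_\lambda(V)=\{\infty\}$, i.e.\ $V=\{\mathbf{v}\}$; a companion itinerary argument handles points of $I(T_\lambda)$.) Your proposed curves around the lattice points $(k\pi,j\pi,0)$ are the wrong candidate: those points do map to $\0$, but what you need is whole curves in $\mathcal{A}(\0)\cap\R^2$, and the diagonal lines $L$ --- not neighbourhoods of isolated lattice points --- are what provide them, via Lemma~\ref{lem:petals}. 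Without an analogue of that lemma, the step you call the ``main obstacle'' cannot be closed.
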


The following related questions remain open: in analogy to Theorem A, are there values of $\lambda$ for which $\overline{I(T_\lambda)}$ is locally a Cantor set, or for which $\overline{I(T_\lambda)}$ and $\mathcal{A}(\0)$ form a partition of~$\R^3$?

Our final results reflect two well-known properties of the Julia set of a meromorphic function $f$: firstly, periodic points are dense in the Julia set; and secondly, that if an open set $U$ meets $J(f)$ then $\bigcup_{k\in\N}f^k(U)$ omits at most two points of $\C\cup\{\infty\}$. The proposition that a version of this `blowing-up property' could be used to define Julia sets for certain wide classes of quasiregular functions is explored in \cite{BergCMFT, BergFJ, SY3}. We observe here that $\overline{I(T_\lambda)}$ possesses a strong form of this property.

\begin{theorem}\label{thm:expanding}
Let $\lambda>0$ and let $U$ be an open set that intersects $\overline{I(T_\lambda)}$. Then, for some $m>0$, 
\[ (T_\lambda)^m\left(U\setminus E_m\right) = \left(\R^3\cup\{\infty\}\right)\setminus\{(0,0,\pm\lambda)\}, \]
where $E_m=\bigcup_{n=1}^{m-1}(T_\lambda)^{-n}(\infty)$.
\end{theorem}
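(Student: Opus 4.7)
The plan is to cascade openness of quasiregular maps with a surjectivity property of $T_\lambda$ on neighbourhoods of infinity. Since $U$ is open and meets $\overline{I(T_\lambda)}=\overline{O^{-}(\infty)}$ by Theorem~\ref{thm:O-(infty)}, I would begin by picking a point $\x_0\in U\cap O^{-}(\infty)$ and letting $k\ge1$ be the smallest integer with $T_\lambda^k(\x_0)=\infty$. Shrinking $U$ if necessary to a small open neighbourhood $W\subseteq U$ of $\x_0$, the iterate $T_\lambda^{k-1}$ is defined and quasiregular on $W\setminus E_{k-1}$, with $\x_0\in W\setminus E_{k-1}$ and $\y_0:=T_\lambda^{k-1}(\x_0)$ a pole of $T_\lambda$.

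Quasiregular maps are open, so $T_\lambda^{k-1}(W\setminus E_{k-1})$ is an open set containing $\y_0$; applying the open quasimeromorphic map $T_\lambda$ once more shows that $T_\lambda^k(W\setminus E_k)$ contains a spherical neighbourhood of infinity of the form $\{\|\x\|>R\}\cup\{\infty\}$ for some $R>0$. Consequently, with $m=k+1$,
\[ T_\lambda^m(W\setminus E_m) \;\supseteq\; T_\lambda\bigl(\{\x\in\R^3:\|\x\|>R\}\bigr). \]
The heart of the argument is then to show that this right-hand side equals $(\R^3\cup\{\infty\})\setminus\{(0,0,\pm\lambda)\}$. Here I would invoke the double periodicity of $T$ established in Section~\ref{sect:T} together with the fact that $(0,0,\pm\lambda)$ are the only omitted values of $T_\lambda$. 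Any other $\mathbf{w}\in\R^3$ then has at least one preimage $\p\in\R^3$; translating $\p$ by integer combinations of the two periods of $T$ produces an unbounded set of preimages of $\mathbf{w}$, at least one of which lies in $\{\|\x\|>R\}$. The poles of $T_\lambda$ likewise form an unbounded periodic set, so some pole in $\{\|\x\|>R\}$ contributes $\infty$ to the image. Since $(0,0,\pm\lambda)$ are in turn omitted values of every iterate of $T_\lambda$, this yields the required equality.

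The main obstacle I foresee is not the topological machinery above but the surjectivity statement itself: that $T_\lambda:\R^3\to\R^3\cup\{\infty\}$ omits \emph{exactly} the two values $(0,0,\pm\lambda)$. This is the quasiregular counterpart of the familiar fact that $\tan$ omits precisely $\pm i$, and would need to be extracted directly from the geometric construction of $T$ in Section~\ref{sect:T}, since the dynamical results preceding this theorem do not address it. The openness of $T_\lambda$ at its poles, needed in the cascade, is a standard fact from the quasimeromorphic theory but should be invoked carefully at the transition from $E_{k-1}$ to $E_k$ and again to $E_{k+1}$.
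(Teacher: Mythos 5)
Your proposal is correct and takes essentially the same approach as the paper: use Theorem~\ref{thm:O-(infty)} to locate a point of $O^-(\infty)$ in $U$, iterate forward using openness of quasiregular maps until the image contains a spherical neighbourhood $\{\|\x\|>R\}$ of infinity, and then apply $T_\lambda$ once more, using double periodicity to conclude that the image equals $T_\lambda(\R^3)=(\R^3\cup\{\infty\})\setminus\{(0,0,\pm\lambda)\}$. The one worry you flag --- that $T_\lambda$ omits \emph{exactly} $(0,0,\pm\lambda)$ --- is not really a gap: it falls straight out of the construction $T=A\circ Z$ in Section~\ref{sect:T}, since $Z(\R^3)=\R^3\setminus\{\0\}$ and $A$ is a homeomorphism of $\R^3\cup\{\infty\}$ sending $\0\mapsto(0,0,-1)$ and $\infty\mapsto(0,0,1)$, which is precisely how the paper closes the proof.
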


\begin{theorem}\label{thm:periodic}
For all $\lambda>0$, the closure of the set of periodic points of $T_\lambda$ contains $I(T_\lambda)$.
\end{theorem}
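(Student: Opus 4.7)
The plan is to combine the blowing-up property of Theorem~\ref{thm:expanding} with a topological fixed-point argument. Fix $\x_0 \in I(T_\lambda)$ and an open neighborhood $U$ of $\x_0$; I want to produce a periodic point of $T_\lambda$ in $U$. Since $I(T_\lambda) \subseteq \{z=0\}$ by Theorem~\ref{thm:attr}, we have $\x_0 \ne (0,0,\pm\lambda)$, and after shrinking $U$ I may assume that $\overline{U}$ is a closed ball centred at $\x_0$ and disjoint from $\{(0,0,\pm\lambda)\}$. Also $\x_0 \notin O^-(\infty)$, so $\x_0 \notin E_m$ for every $m$.

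Applying Theorem~\ref{thm:expanding} yields some $m \geq 1$ with
\[
(T_\lambda)^m(U \setminus E_m) = (\R^3 \cup \{\infty\}) \setminus \{(0,0,\pm\lambda)\},
\]
so every point of $\overline{U}$ has at least one $(T_\lambda)^m$-preimage in $U \setminus E_m$; in particular $\x_0$ has such a preimage $\y_0$, and if $\y_0 = \x_0$ we are immediately done. In general I would find a fixed point of $(T_\lambda)^m$ in $U$ via an inverse branch. Since $(T_\lambda)^m$ is an open, discrete, orientation-preserving quasiregular map whose branch set has topological codimension at least two, I can select a small closed ball $\overline{B} \subseteq U$ whose interior is disjoint from the branch values of $(T_\lambda)^m$, together with a regular preimage $\q \in U \setminus E_m$ of its centre. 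Continuation of the local homeomorphism inverse at $\q$ along paths in the simply-connected $\overline{B}$ produces a continuous single-valued inverse branch $\phi : \overline{B} \to U$ with $(T_\lambda)^m \circ \phi = \mathrm{id}_{\overline{B}}$. Provided $\phi(\overline{B}) \subseteq \overline{B}$, Brouwer's theorem supplies $\mathbf{z}^* \in \overline{B}$ with $\phi(\mathbf{z}^*) = \mathbf{z}^*$, whence $(T_\lambda)^m(\mathbf{z}^*) = \mathbf{z}^*$ is a period-$m$ point of $T_\lambda$ in $U$.

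The main obstacle is securing the self-containment $\phi(\overline{B}) \subseteq \overline{B}$. The guiding idea is that preimages of any given point of $\overline{B}$ are dense in $I(T_\lambda)$ --- another consequence of Theorem~\ref{thm:expanding} applied to iterates --- so the preimage $\q$ can be chosen arbitrarily close to the centre of $\overline{B}$. Combined with the strong expansion of $(T_\lambda)^m$ near $\q$, this should force $\phi(\overline{B})$ to be a small neighbourhood of $\q$ lying inside $\overline{B}$. Making the size comparison rigorous requires distortion and H\"older-continuity estimates for inverses of quasiregular mappings, possibly combined with a Zalcman--Miniowitz rescaling of the non-normal family $\{(T_\lambda)^n\}$ at $\x_0$ to extract a limiting quasiregular function whose dynamics produces a periodic orbit of $T_\lambda$ near $\x_0$. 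Since $U$ was arbitrary, this yields the density of periodic points in $I(T_\lambda)$.
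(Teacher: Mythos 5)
Your proposal has the right high-level shape --- find an inverse branch of some iterate $(T_\lambda)^m$ mapping a compact set into itself and invoke Brouwer --- but the central step, securing $\phi(\overline{B})\subseteq\overline{B}$, is left as a heuristic. You acknowledge this yourself, and the strategies you gesture at (quasiregular distortion estimates, Zalcman--Miniowitz rescaling) would not close the gap as stated. The inverse branch $\phi$ continued from $\q$ has no reason to contract $\overline{B}$ uniformly: the expansion of $(T_\lambda)^m$ along the orbit of $\q$ is controlled only when that orbit stays near poles, and nothing in your construction guarantees that. In particular, the density of preimages of the centre in $I(T_\lambda)$ does not by itself force $\phi$ to be contracting on all of $\overline{B}$; $D\phi$ could be small at $\q$ but large elsewhere in $\overline{B}$ absent a uniform derivative bound. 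The rescaling idea also has a separate problem: a periodic point of a rescaling limit does not automatically yield a periodic point of $T_\lambda$ without a stability argument, which is far from routine for quasiregular maps that are not uniformly quasiregular.

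The paper sidesteps exactly this difficulty by working with the itinerary structure developed in Section~\ref{sect:itineraries}. The key is that Lemma~\ref{lem:Sq} provides inverse branches $S_\q$ with explicit domains and ranges: each $S_\q$ maps a fixed large domain into $W(\q)$, and in particular, once the itinerary $(\p_n)$ of $\x_0$ is far enough out (say $\|\p_n\|>R$ for $n\geq N$), the composition
\[
S_{\p_{N+M}}\circ S_{\p_0}\circ\cdots\circ S_{\p_{N+M-1}} : \overline{W(\p_{N+M})}\to W(\p_{N+M})
\]
is automatically a self-map of a compact convex set. Brouwer then gives a periodic cycle $\{\y_0,\ldots,\y_{N+M}\}$ with $\y_n\in W(\p_n)$, free of charge. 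The remaining work is to show $\y_0$ is close to $\x_0$, and for that the paper uses the \emph{uniform} expansion estimate of Lemma~\ref{expanding on B}, valid on balls $B^2(\p,\varepsilon)$ about poles with $\|\p\|$ large: since $\x_n$ and $\y_n$ both lie in such balls for $N\le n<N+M$, iterating gives $\|\x_N-\y_N\|\le 2^{-M}\pi$, and then the inverse branches $S_{\p_0}\circ\cdots\circ S_{\p_{N-1}}$ carry this smallness back to $B^2(\x_0,\eta)$ by continuity. This is the quantitative mechanism that your proposal was missing: the self-containment comes from the combinatorial structure of the $W(\p)$'s rather than from a generic distortion argument, and the closeness to $\x_0$ comes from tracking the orbit through a long stretch of near-pole cells where the expansion constant $2$ is uniform.

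So the verdict is: your approach is not wrong in spirit, but as written it contains a genuine gap at the Brouwer self-map step, and the tools you suggest for filling it are not adequate. The paper's itinerary-based construction is the natural way to make the Brouwer argument work here.
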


The definition of the quasiregular generalized tangent mapping $T$ is given in Section~\ref{sect:T}, where we also describe some of its geometric properties. In Section~\ref{sect:attr}, we prove Theorem~\ref{thm:attr} by studying the behaviour of $T_\lambda$ as a self-map of $\{(x,y,z):z>0\}$. In Section~\ref{sect:expanding}, we explore the expanding behaviour of the restriction of $T_\lambda$ to the $(x,y)$-plane. A method of defining itineraries on the escaping set $I(T_\lambda)$ is introduced in Section~\ref{sect:itineraries} and this allows us to establish Theorem~\ref{thm:O-(infty)} and Theorem~\ref{thm:sqrt2}. We consider the connectedness of $\overline{O^{-}(\infty)}$ in Section~\ref{sect:conn}, proving Theorem~\ref{thm:connlocus}. In Section~\ref{sect:short-proof}, we quickly deduce Theorem~\ref{thm:expanding} from Theorem~\ref{thm:O-(infty)}. Section~\ref{sect:periodic} contains the proof of Theorem~\ref{thm:periodic}, which again makes use of the itineraries on the escaping set.


\section{The generalized tangent mapping $T$}\label{sect:T}

We will fix the following notation throughout: elements of $\R^3$ will be denoted by $\x=(x,y,z)$; the Euclidean norm will be denoted by $\| \x \|$, with $|\cdot|$ reserved for the modulus of real or complex numbers; and we will write $B(\x,r)$ for an open ball centred at $\x \in \R^3$ of Euclidean radius $r>0$.

\subsection{Construction of $T$}\label{sect:Constr T}

The quasiregular Zorich mapping \cite{Z} (see also \cite[\S I.3.3]{R} and \cite[\S 6.5.4]{IM}) serves as a higher-dimensional analogue of the complex exponential function, and can be defined as follows. First, choose a bi-Lipschitz map 
\[ h:\left[-\frac\pi2, \frac\pi2\right]^2\to\left\{(x,y,z):x^2+y^2+z^2=1, z\ge0\right\}. \]
For our purposes, we shall take 
\[ h(x,y) = \left( \frac{x\sin M(x,y)}{\sqrt{x^2+y^2}}, \frac{y\sin M(x,y)}{\sqrt{x^2+y^2}},\cos M(x,y)\right), \]
where $M(x,y)=\max\{|x|,|y|\}$. Next define $Z:[-\pi/2,\pi/2]^2\times\R\to\{(x,y,z):z\ge0\}$ by $Z(x,y,z)=e^zh(x,y)$. This may then be extended to a mapping $Z:\R^3\to\R^3\setminus\{\0\}$ by repeatedly reflecting in the $x=(k+\frac12)\pi$ and $y=(l+\frac12)\pi$ planes ($k,l\in\Z$) in the domain and in the $z=0$ plane in the image. The resulting Zorich mapping is quasiregular on $\R^3$ and doubly-periodic with periods $(2\pi,0,0)$ and $(0,2\pi,0)$.

We observe that the complex function $\tan z=\frac{i(1-e^{2iz})}{1+e^{2iz}}$ is the composition of a M\"{o}bius map and an exponential function. Define a sense-preserving M\"{o}bius map $A:\R^3\to\R^3\cup\{\infty\}$ by
\[ A(x,y,z) = (2rx, 2ry, 1-2r(z+1)), \]
where
\[ r = r(x,y,z) = \frac{1}{x^2+y^2+(z+1)^2}. \]
We then define our three-dimensional analogue of tangent by
\eqn T(\x) = (A\circ Z)(2\x). \label{defn T} \eqnend
The map $T$ is quasiregular since $A$ and $Z$ are quasiregular. A similar approach could be used to define a variant of $T$ in higher dimensions, but we shall not consider this. Using the expression for the Zorich map discussed above, in the beam $X=[-\pi/4,\pi/4]^2\times\R$ we can write
\begin{multline}
T(x,y,z)= \\
\left(\frac{x\cos M(x,y)\sin M(x,y)}{\sqrt{x^2+y^2}(\cos^2 M(x,y) + \sinh^2 z)},
\frac{y\cos M(x,y)\sin M(x,y)}{\sqrt{x^2+y^2}(\cos^2 M(x,y) + \sinh ^2 z)},
\frac{\sinh z \cosh z}{\cos^2 M(x,y) +  \sinh^2 z} \right). 
\label{teq} \end{multline}
Note that $A$ maps the $(x,y)$-plane to the unit sphere. Hence, we may equivalently define $T$ on $X$ by \eqref{teq}, and then extend $T$ to a mapping $\R^3\to\R^3\cup\{\infty\}$ by reflecting in the $x=\frac\pi2(k+\frac12)$ and $y=\frac\pi2(l+\frac12)$ planes ($k,l\in\Z$) in the domain and inverting in the unit sphere in the image. The inversion referred to here is simply the mapping $\x\mapsto\x/\|\x\|^2$.

\subsection{Geometric properties of $T$}

In this section we describe some of the properties of the mapping $T$ and compare them to those of the tangent function on the complex plane. For example, tangent is $\pi$-periodic while $T$ has periods $(\pi,0,0)$ and $(0,\pi,0)$ by \eqref{defn T}. Furthermore, we see from \eqref{teq} that both functions have fixed points at the origin. 

By construction, the zeroes of $T$ lie at the points $((n+m)\pi/2,(n-m)\pi/2,0)$, where $m,n\in\Z$, and $T$ has poles at $((n+m)\pi/2,(n-m+1)\pi/2,0)$.

In fact, there are two copies of the tangent function embedded in $T$; that is, the restriction of $T$ to either the $(x,z)$-plane or the $(y,z)$-plane gives the standard tangent function on these planes. This can be seen by comparing \eqref{teq} with the trigonometric identity
\eqn \tan(a+ib) = \frac{\cos a\sin a}{(\cos^2a + \sinh^2b)} + i\left(\frac{\sinh b\cosh b}{\cos^2a + \sinh^2b}\right).\label{eq:tan(a+ib)}\eqnend

It is well known that the values $i$ and $-i$ are omitted asymptotic values of the tangent function. It is not hard to see that, for $x,y$ fixed,
\eqn \lim_{z \to \pm\infty} T(x,y,z) = (0,0,\pm1) \label{eq:as value} \eqnend
and that $T$ omits the values $(0,0,\pm1)$. This follows from \eqref{defn T} by noting that the Zorich map has omitted asymptotic values $\0$ and $\infty$, while the M\"{o}bius map $A$ is a homeomorphism of $\R^3\cup\{\infty\}$ with $A(\0)=(0,0,-1)$ and $A(\infty)=(0,0,1)$.

We remark that, in some respects, under the map $T$ the $(x,y)$-plane plays a role similar to that of the real axis under the tangent map, while the $z$-axis plays the role of the imaginary axis. For example, tangent maps the real axis onto $\R\cup\{\infty\}$ and the imaginary axis onto the line segment joining the omitted values $i$ and $-i$. Correspondingly, the image of the $(x,y)$-plane under $T$ is the $(x,y)$-plane plus the point at infinity, and the $z$-axis is mapped by $T$ onto the line segment joining $(0,0,-1)$ to $(0,0,1)$. Moreover, the upper and lower half-planes are completely invariant under the tangent map, as are the half-spaces $\{(x,y,z):z>0\}$ and $\{(x,y,z):z<0\}$ under the mapping $T$. Here we say that a set $U$ is \emph{completely invariant} under a mapping $f$ if $f(U)\subseteq U$ and $f^{-1}(U)\subseteq U$.

We observe that tangent is both an odd function and a real function, so that $\tan(-z)=-\tan z$ and $\tan\overline{z}=\overline{\tan z}$. The related result for the map $T$ is that if $R$ is a reflection in any one of the three co-ordinate planes of $\R^3$, then $T(R(\x))=R(T(\x))$.

Finally, we mention that $T$ is a homeomorphism from $\mathop{\rm int}(X)=(-\pi/4,\pi/4)^2\times\R$ onto the unit ball in $\R^3$.

\section{Dynamics of $T_\lambda$ on the upper and lower half-spaces}\label{sect:attr}

The aim of this section is to prove Theorem \ref{thm:attr}. In the absence of suitable quasiregular versions of the Schwarz Lemma or the Denjoy-Wolff Theorem for this situation, we shall work directly from the expressions \eqref{defn T} and \eqref{teq} for $T$. We begin by noting that the required fixed points exist because $T_\lambda(0,0,z)=(0,0,\lambda\tanh z)$ by \eqref{teq}.

It is useful to observe that if ${\bf x}_0$ is a fixed point of $T_\lambda$, then the basin of attraction $\mathcal{A}({\bf x}_0)$ is completely invariant under $T_\lambda$. Moreover, if ${\bf x}_0$ is an attracting fixed point, then it follows from the definition that $\mathcal{A}({\bf x}_0)$ is an open set on which the iterates of $T_\lambda$ converge to ${\bf x}_0$ locally uniformly. We recall that $\xi_0>0$ is defined by \eqref{defn xi0} and we shall abbreviate $\mathcal{A}((0,0,\pm\xi_0))$ to $\mathcal{A}(\pm\xi_0)$.

The following three propositions will establish Theorem \ref{thm:attr} because of the reflection symmetry of $T$.

\begin{proposition}\label{prop:attr}
If $\lambda >1$, then $T_{\lambda}$ has an attracting fixed point at $(0,0,\xi_0)$. If $0<\lambda <1$, then $T_{\lambda}$ has an attracting fixed point at $\0$.
\end{proposition}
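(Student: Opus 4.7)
The plan is to reduce the problem to a one-dimensional analysis on the $z$-axis and then extend the attracting behaviour of the one-dimensional map to a three-dimensional neighbourhood via the explicit formula \eqref{teq}. From \eqref{teq} we have $T_\lambda(0,0,z)=(0,0,\lambda\tanh z)$, so the fixed points on the $z$-axis are precisely the solutions of $\lambda\tanh z=z$: when $\lambda>1$ the unique positive one is $\xi_0$, while when $0<\lambda<1$ only $z=0$ works. Writing $\x_0=(0,0,z_0)$ for the relevant fixed point and $\mu:=\lambda\,\mathop{\rm sech}\nolimits^2 z_0$ for the one-dimensional multiplier at $z_0$, the subcase $0<\lambda<1$, $z_0=0$ gives $\mu=\lambda<1$ immediately. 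For the subcase $\lambda>1$, $z_0=\xi_0$, I would show $\mu<1$ by noting that $h(\xi):=\lambda\tanh\xi-\xi$ satisfies $h(0)=0=h(\xi_0)$, $h'(0)=\lambda-1>0$, and $h''(\xi)=-2\lambda\,\mathop{\rm sech}\nolimits^2\xi\,\tanh\xi<0$ for $\xi>0$; strict concavity on $[0,\infty)$ then forces $h'(\xi_0)<0$, i.e.\ $\mu<1$.

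Next, fix $c$ with $\mu<c<1$ and work on a small ball $B(\x_0,\delta)\subset\mathop{\rm int}(X)$, on which \eqref{teq} applies. Writing $M=M(x,y)=\max\{|x|,|y|\}$ and using $\sin M\le M\le\sqrt{x^2+y^2}$ together with $\cos M\le 1$, the squared norm of the horizontal part of $T_\lambda(\x)$ satisfies
\[
\lambda^2\frac{\cos^2 M\,\sin^2 M}{(\cos^2 M+\sinh^2 z)^2}\le\lambda^2\frac{x^2+y^2}{(\cos^2 M+\sinh^2 z)^2}.
\]
The prefactor tends to $\mu^2$ as $\x\to\x_0$, so for $\delta$ small it is at most $c^2$. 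For the vertical component I would write the third coordinate of $T_\lambda(\x)$ as $w:=\lambda\,\dfrac{\sinh z\cosh z}{\cos^2 M+\sinh^2 z}$ and split
\[
w-z_0=\bigl[w-\lambda\tanh z\bigr]+\bigl[\lambda\tanh z-\lambda\tanh z_0\bigr].
\]
A direct calculation using $\cosh^2 z-\sinh^2 z=1$ simplifies the first bracket to $\lambda\tanh z\cdot\dfrac{\sin^2 M}{\cos^2 M+\sinh^2 z}$, which is $O(x^2+y^2)$, while the mean value theorem bounds the second bracket by $c|z-z_0|$ once $\delta$ is small. Combining these estimates gives $\|T_\lambda(\x)-\x_0\|\le c'\|\x-\x_0\|$ on $B(\x_0,\delta)\setminus\{\x_0\}$ for some $c'<1$, which is the required attracting condition.

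The main obstacle is the horizontal-to-vertical coupling introduced by the $\cos^2 M$ term in the denominator of the $z$-coordinate: a non-zero horizontal displacement perturbs the image of the one-dimensional map away from $\lambda\tanh z$. This perturbation is of order $\sin^2 M=O(x^2+y^2)$, however, so it is dominated by the linear contraction once $\delta$ is chosen small enough, and the non-smoothness of $M(x,y)$ along the coordinate axes of the $(x,y)$-plane plays no role because only the crude estimate $M\le\sqrt{x^2+y^2}$ enters the argument.
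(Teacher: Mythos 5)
Your proof is correct, but it takes a genuinely different route from the paper's. You estimate the horizontal and vertical coordinates of $T_\lambda(\x)-\x_0$ separately from the raw formula \eqref{teq}, computing the one-dimensional multiplier $\mu=\lambda\operatorname{sech}^2 z_0<1$ from scratch (your concavity argument for $\lambda>1$ is a nice touch, though the paper simply invokes the Devaney--Keen result that $i\xi_0$ is attracting for $\lambda\tan\zeta$), and then you have to argue that the horizontal-to-vertical coupling term of size $O(x^2+y^2)$ is absorbed by shrinking $\delta$. That last combination step is the only place your write-up is a bit informal: since the horizontal estimate gives $c\sqrt{x^2+y^2}$ while the vertical one gives $c|z-z_0|+O(x^2+y^2)$, you need to expand $\|T_\lambda(\x)-\x_0\|^2$ and check that the cross- and quartic terms are $O(\delta)\cdot\|\x-\x_0\|^2$ before choosing $\delta$; the argument does close, but it should be said. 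The paper avoids all of this by exploiting the identity, visible from \eqref{teq} and \eqref{eq:tan(a+ib)}, that for $\x\in X$
\[
\|T_\lambda(x,y,z)-(0,0,\xi_0)\|^2=\bigl[\lambda\Real\tan(M+iz)\bigr]^2+\bigl[\lambda\Imag\tan(M+iz)-\xi_0\bigr]^2=\bigl|\lambda\tan(M+iz)-i\xi_0\bigr|^2,
\]
because the weights $x/\sqrt{x^2+y^2}$, $y/\sqrt{x^2+y^2}$ have squared sum one. Together with $|(M+iz)-i\xi_0|\le\|\x-(0,0,\xi_0)\|$, this collapses the three-dimensional contraction exactly onto the known contraction of $\lambda\tan$ at $i\xi_0$, with no error term to manage. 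Your approach is more elementary and self-contained (no appeal to the complex result), at the cost of the extra bookkeeping; the paper's is shorter and makes the relationship between $T_\lambda$ and $\lambda\tan$ transparent.
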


\begin{proposition}\label{A=UHS}
If $\lambda>1$, then $\mathcal{A}(\xi_0) = \{(x,y,z):z>0\}$.
\end{proposition}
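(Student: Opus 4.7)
The inclusion $\mathcal{A}(\xi_0)\subseteq\{(x,y,z):z>0\}$ is immediate from the complete invariance of the upper half-space noted in Section~3, together with $(0,0,\xi_0)$ lying there. For the reverse inclusion, I would fix an arbitrary $\x_0=(x_0,y_0,z_0)$ with $z_0>0$ and, writing $\x_n=T_\lambda^n(\x_0)=(x_n,y_n,z_n)$, establish that $\x_n\to(0,0,\xi_0)$.

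My first task is to pin down the third coordinate along the orbit. A direct computation from \eqref{teq} on the beam $X$, together with tracking the effect of the reflection-in-domain / inversion-in-unit-sphere extension on the $z$-coordinate, shows that $(T_\lambda(\x))_3=\lambda\sinh z\cosh z/(c+\sinh^2 z)$ for some $c=c(\x)\in[0,1]$ (the cases $c=\cos^2M'$ and $c=\sin^2M'$ arising according to the parity of reflections needed to bring $\x$ into the beam). This yields the two-sided bound
\[
\lambda\tanh z_n\;\le\;z_{n+1}\;\le\;\lambda\coth z_n.
\]
Writing $\phi(z)=\lambda\tanh z$, the lower bound gives $z_{n+1}\ge\phi(z_n)$. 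Since $\phi$ is monotone increasing on $(0,\infty)$ with $\phi(z)>z$ on $(0,\xi_0)$ and $\phi(\xi_0)=\xi_0$, the truncated sequence $Z_n:=\min(z_n,\xi_0)$ satisfies $Z_{n+1}\ge\phi(Z_n)$ and is non-decreasing, so $Z_n\uparrow\xi_0$ and hence $\liminf_n z_n\ge\xi_0$. Combined with the upper bound, $\{z_n\}$ is eventually contained in some compact subinterval $[a,b]\subset(0,\infty)$.

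My second task is a local contraction analysis near the fixed point. Expanding \eqref{teq} around $(0,0,\xi_0)$, using $\sin M\approx M\le\sqrt{x^2+y^2}$ and $\cos M\approx 1$ for small $(x,y)$, one finds that $T_\lambda$ contracts there with factor $\lambda/\cosh^2\xi_0=2\xi_0/\sinh(2\xi_0)<1$. This produces a small forward-invariant ball $B$ about $(0,0,\xi_0)$ with $B\subseteq\mathcal{A}(\xi_0)$, and reduces the proposition to showing that some iterate $\x_n$ enters $B$ (equivalently, enters a neighbourhood of any pre-image $(k\pi,l\pi,\xi_0)$ of the fixed point, since such points also lie in $\mathcal{A}(\xi_0)$ because $T_\lambda(k\pi,l\pi,\xi_0)=(0,0,\xi_0)$ by $\pi$-periodicity).

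The main obstacle is this last step. My plan is to exploit the $\pi$-periodicity in $(x,y)$: the orbit projects to a compact subset of $(\R/\pi\Z)^2\times[a,b]$ and therefore has a non-empty compact $T_\lambda$-invariant $\omega$-limit set $\Omega$. Since $\Omega$ is bounded, every $\y^*\in\Omega$ arises as $\y^*=T_\lambda(\y^{**})$ for some $\y^{**}\in\Omega$, enabling backward iteration inside $\Omega$. The strict inequality $z_{n+1}>\phi(z_n)$ holds whenever the reflected $(x,y)$-point is off the $z$-axis; applying this to a minimiser $\y^*$ of the $z$-coordinate in $\Omega$, and using the constraint $y^{**}_3\ge y^*_3$ forced by minimality together with the pre-image relation $y^*_3=(T_\lambda(\y^{**}))_3\ge\phi(y^{**}_3)$, one aims to conclude that such a minimiser must have $(x,y)\equiv(0,0)\pmod{\pi}$ and $z$-coordinate $\xi_0$. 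This would give $(0,0,\xi_0)\in\Omega$, put a tail of the orbit inside $B$, and complete the proof via the local contraction.
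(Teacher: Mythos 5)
Your preliminary steps are sound: the two-sided bound $\lambda\tanh z_n\le z_{n+1}\le\lambda\coth z_n$ is correct (the paper in fact proves only the lower bound, but the upper one follows the same way), and your derivation that $\liminf z_n\ge\xi_0$ and that $(z_n)$ is eventually trapped in a compact subinterval of $(0,\infty)$ is fine. The local-contraction estimate near $(0,0,\xi_0)$ is also correct, though it is already subsumed in Proposition~\ref{prop:attr}. The quotient setup and the use of Brouwer--Birkhoff-type invariance of the $\omega$-limit set are legitimate.

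The gap is in the final, essential step. From $y^{**}_3\ge y^*_3$ (minimality) and $y^*_3=(T_\lambda(\y^{**}))_3\ge\phi(y^{**}_3)$, monotonicity of $\phi$ only gives $y^*_3\ge\phi(y^*_3)$, i.e.\ $y^*_3\ge\xi_0$ --- which you already knew from the liminf bound. This does not force equality, and without equality you cannot conclude that $\y^{**}$ (or $\y^*$) lies on the axis. The inequality $(T_\lambda)_3(\x)\ge\lambda\tanh z$ is an equality precisely when $\hat M(x,y)=0$, but an $\omega$-limit set could in principle live entirely in the region $z>\xi_0$ with $\hat M\neq0$ everywhere, and nothing in your chain of inequalities excludes that. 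Notice also that the off-axis behaviour of the third coordinate is genuinely non-monotone: for fixed $(x,y)$, the map $z\mapsto(T_\lambda)_3(x,y,z)$ is decreasing when $c:=\cos^2\hat M$ (or $\sin^2\hat M$) is near $0$, so there is no hope of getting a contradiction from $z$ alone. What is missing is a strict Lyapunov function that decreases off the axis; the paper supplies exactly this in Lemma~\ref{lem:rho}, namely $\rho(\x)=M(x,y)/z$ with $\rho(T_\lambda(\x))<\rho(\x)$ whenever $M\neq0$. Once you have such a quantity, the compactness/subsequence argument (or equivalently the argument on the $\omega$-limit set) goes through immediately: if $\rho^*>0$, applying $T_\lambda$ at the limit point strictly decreases $\rho$, contradicting that $\rho^*$ is the limit. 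Your approach would need to import this Lyapunov function (or construct one of your own) to close the argument; the $z$-coordinate bounds are not enough.
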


\begin{proposition}\label{prop:A(0)}
If $0<\lambda\le1$, then $\{(x,y,z):z>0\}\subseteq\mathcal{A}(\0)$.
\end{proposition}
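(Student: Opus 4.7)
The plan is to follow the orbit $\x_n = T_\lambda^n(\x_0)$ of an arbitrary $\x_0 \in H := \{z > 0\}$ (which stays in $H$ by the complete invariance noted in Section~\ref{sect:T}) and show $\x_n \to \0$.

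First I would establish local attraction at $\0$. From \eqref{teq} one computes
\[
\|T_\lambda(\x)\|^2 = \lambda^2\,\frac{\sin^2 M + \sinh^2 z}{\cos^2 M + \sinh^2 z}, \qquad \x \in X,
\]
and a Taylor expansion gives $\|T_\lambda(\x)\| \le \lambda\|\x\|(1+o(1))$ as $\x \to \0$ (using $\sin^2 M \le M^2 \le x^2+y^2$ and $\cos^2 M + \sinh^2 z \to 1$). For $\lambda < 1$ this yields $B(\0,\varepsilon) \cap H \subseteq \mathcal{A}(\0)$ for some $\varepsilon>0$. The case $\lambda=1$ is more delicate since $\0$ is only neutral (mirroring the parabolic fixed point of $\tan$); here I would combine the invariance of the coordinate planes $(x,z)$ and $(y,z)$ under $T_1$ with Theorem~A (which gives convergence of iterates on those planes) and a continuity/comparison argument from the explicit formula to extend local attraction to a full neighbourhood of $\0$ in $H$.

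The main step is to show every orbit in $H$ eventually enters this attracting neighbourhood. My plan is to introduce the cone $C = \{\x \in H : z \ge \max(|x|,|y|)\}$ and verify $T_\lambda(C) \subseteq C$ for every $\lambda>0$: using \eqref{teq} in the beam and the inversion rule for tiles requiring an odd number of reflections, in both regimes the ratio of the third coordinate of $T(\x)$ to the norm of its first two coordinates equals $\Imag\tan(\tilde M + iz)/\Real\tan(\tilde M + iz)$, where $\tilde M \in [0,\pi/4]$ is the reduced $M$-value, and this ratio is at least $1$ whenever $z \ge \tilde M$, which always holds on $C$. Iterates in $C$ are moreover uniformly bounded (by $\lambda \le 1$ in the beam since $|\tan(\tilde M + iz)| \le 1$ for $\tilde M \le \pi/4$, and by $\lambda/\tanh(\pi/4)$ in inversion tiles where $z > \pi/4$ forces $|\tan(\tilde M + iz)| \ge \tanh z \ge \tanh(\pi/4)$). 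A crucial nonexistence claim --- that $T_\lambda$ has no fixed point in $H$ for $\lambda \le 1$ --- can be verified directly: on the $z$-axis the equation $z=\lambda\tanh z$ forces $z=0$, and for an off-axis fixed point one would need $R := \sqrt{x^2+y^2} = \lambda\Real\tan(M+iz)$ with $R \ge M$, but the estimate
\[
\frac{\Real\tan(M+iz)}{\Imag\tan(M+iz)} = \frac{\sin(2M)}{\sinh(2z)} < \frac{M}{z}
\]
combined with $z = \lambda\Imag\tan(M+iz)$ gives $\lambda\Real\tan(M+iz) < M$, a contradiction. A compactness/subsequential-limit argument then forces $\x_n \to \0$ once the orbit is eventually in $C$. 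For starting points not in $C$, the norm bound $\|T_\lambda(\x)\| \le \lambda$ on $X\cap H$ combined with the reflected-tile formula shows absorption into $C$ after finitely many iterations.

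The principal obstacle is to handle ``spikes'' caused by the inversion in odd-reflection tiles, where an iterate close to a pole of $T_\lambda$ in the $(x,y)$-plane is sent to one of large norm. The resolution is that such a spiked iterate has large $z$-coordinate, so by the asymptotic value \eqref{eq:as value} one more application of $T_\lambda$ places the orbit close to $(0,0,\lambda) \in C$; hence spikes can occur at most once before the orbit enters $C$ and the preceding argument takes over.
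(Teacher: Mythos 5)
Your overall strategy --- follow the orbit, use a cone around the positive $z$-axis, show orbits are eventually absorbed and bounded, and conclude by compactness --- is close in spirit to the paper's argument, but several of the key steps are either incorrect or not justified as stated.

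The most serious problem is the step ``a compactness/subsequential-limit argument then forces $\x_n\to\0$ once the orbit is eventually in $C$.'' Compactness together with the absence of fixed points in $H$ does \emph{not} force convergence of the orbit to the boundary fixed point $\0$: the $\omega$-limit set of a bounded orbit is a compact invariant set, but need not be a single fixed point (it could, a priori, be a periodic cycle or something worse). What the paper actually uses is that $\rho(\x)=M(x,y)/z$ is a \emph{strict Lyapunov function} for $T_\lambda$ on the open upper half-space minus the $z$-axis (Lemma~\ref{lem:rho}). Since $\rho(\x_n)$ is decreasing it has a limit $\rho^\ast$; if the orbit does not converge to $\0$, a subsequence converges to some $\x^\ast$ with $z^\ast>0$, and continuity of $\rho$ and $T_\lambda$ near $\x^\ast$ forces $\rho(T_\lambda\x^\ast)=\rho(\x^\ast)=\rho^\ast$, contradicting the strict decrease unless $\x^\ast$ lies on the $z$-axis, so $\rho^\ast=0$. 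You actually derive the needed inequality $\sin(2M)/\sinh(2z)<M/z$ in your fixed-point nonexistence paragraph --- this \emph{is} the content of Lemma~\ref{lem:rho} --- but you deploy it only to rule out fixed points rather than running it through the orbit as a Lyapunov function, so the argument is incomplete. Incidentally this same monotonicity of $\rho$ is what removes the need for a separate ``absorption into $C$'' step: $\rho(\x_n)\to0$ eventually puts the orbit in any cone about the $z$-axis.

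The ``spike'' resolution is also incorrect. It is not true that an iterate thrown to large norm by inversion near a pole has large $z$-coordinate. Taking $\x_n=(\varepsilon,\pi/2,\delta)$ with $\delta\ll\varepsilon^2$, one computes from \eqref{teq} and the inversion rule that $T_\lambda(\x_n)\approx(\lambda/\varepsilon,\,0,\,\lambda\delta/\varepsilon^2)$, whose $z$-coordinate can be made arbitrarily small while the norm is arbitrarily large. So ``one more application of $T_\lambda$ places the orbit close to $(0,0,\lambda)$'' fails, and a spike can in principle happen more than once. (Again it is the monotonicity of $\rho$ that controls this; in the example $\rho$ still strictly decreases.)

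Finally, the $\lambda=1$ local attraction step is hand-waved. Appealing to Theorem~A on the invariant $(x,z)$- and $(y,z)$-planes together with an unspecified ``continuity/comparison argument'' would need substantial work, since parabolic convergence in the two coordinate planes gives no obvious control of a general orbit. It is also a stronger claim than is needed: the paper's Lemma~\ref{V in A(0)} establishes only that a \emph{cone} neighbourhood $\{M(x,y)<z/2<\varepsilon\}$ of the positive $z$-axis lies in $\mathcal{A}(\0)$, via a direct Taylor expansion showing $(T)_3(x,y,z)\le z-z^3/24$ there, and that is exactly what is needed once one knows $\rho(\x_n)\to0$. In summary, you have most of the right ingredients (the cone, the ratio inequality, boundedness, the asymptotic value) but the argument as written omits the Lyapunov mechanism that actually makes the conclusion follow, and two of the supporting claims (large $z$ after a spike; neighbourhood attraction at $\lambda=1$) are respectively false and unjustified.
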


\begin{proof}[Proof of Proposition \ref{prop:attr}]
Let us fix $\lambda >1$.
The meromorphic function $\tau_\lambda(\zeta)=\lambda\tan\zeta$ has an attracting fixed point at $i\xi_0$ and so there exist $c<1$ and $\varepsilon\in(0,\pi/4)$ such that, for $\zeta\in\mathbb{C}$,
\begin{equation}
|\zeta-i\xi_0|<\varepsilon \quad \Rightarrow \quad |\lambda\tan\zeta - i\xi_0|<c|\zeta-i\xi_0|.  \label{eq:tan zeta}
\end{equation}
Let $(x,y,z)\in B((0,0,\xi_0),\varepsilon)$ and write $M=M(x,y)=\max\{|x|,|y|\}$. We have that
\begin{align}
|(M+iz)-i\xi_0|^2 = M^2 + (z-\xi_0)^2 &\le x^2 + y^2 + (z-\xi_0)^2 \notag \\
&= \|(x,y,z)-(0,0,\xi_0)\|^2<\varepsilon^2. \label{eq:M+iy-iY0}
\end{align}
Since $(x,y,z)\in X$, we see from \eqref{teq} and \eqref{eq:tan(a+ib)} that
\[ T(x,y,z) = \left(\frac{x}{\sqrt{x^2+y^2}}\Real(\tan(M+iz)), \frac{y}{\sqrt{x^2+y^2}}\Real(\tan(M+iz)), \Imag(\tan(M+iz))\right) \]
and hence
\begin{align}
\|T_\lambda(x,y,z)-(0,0,\xi_0)\|^2 &= [\lambda\Real(\tan(M+iz))]^2 + [\lambda\Imag(\tan(M+iz))-\xi_0]^2 \notag \\
&= |\lambda\tan(M+iz)-i\xi_0|^2. \label{eq:T-Y0=tan-Y0}
\end{align}
From \eqref{eq:tan zeta}, \eqref{eq:M+iy-iY0} and \eqref{eq:T-Y0=tan-Y0} we deduce that
\[ \|T_\lambda(x,y,z)-(0,0,\xi_0)\| < c|(M+iz)-i\xi_0| \le c\|(x,y,z)-(0,0,\xi_0)\|. \]
Therefore $(0,0,\xi_0)$ is an attracting fixed point of $T_\lambda$. To prove that $\0$ is an attracting fixed point when $0<\lambda <1$, simply replace $\xi_0$ by 0 in the above argument.
\end{proof}

The following lemma is the key to the proof of Proposition \ref{A=UHS}. For $z>0$, we define
\begin{equation}
 \rho(x,y,z) = \frac{M(x,y)}{z}.   \label{defn rho}
\end{equation}

\begin{lemma}\label{lem:rho}
If $\lambda>0$, $z>0$ and $M(x,y)\ne0$, then we have that 
\[ \rho(T_\lambda(x,y,z)) < \rho(x,y,z). \]
\end{lemma}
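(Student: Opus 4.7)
The plan is to reduce the inequality to a direct computation on the fundamental beam $X = [-\pi/4,\pi/4]^2 \times \R$, where the explicit formula \eqref{teq} is available. The key observation driving this reduction is that $\rho(\x) = M(x,y)/z$ enjoys two natural invariances: it is unchanged under positive scalar multiplication $\x \mapsto \mu\x$ (since $M$ and $z$ scale identically), and also unchanged under inversion in the unit sphere $\iota(\x) = \x/\|\x\|^2$ (since all three coordinates are scaled by the common factor $1/\|\x\|^2$). In particular, the scaling invariance gives $\rho \circ T_\lambda = \rho \circ T$, so the parameter $\lambda$ plays no essential role.

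Next I would use the description of $T$ from Section~\ref{sect:Constr T}: the map is $\pi$-periodic in each of $x$ and $y$, and a reflection in the domain across one of the planes $x = \tfrac\pi4 + k\tfrac\pi2$ or $y = \tfrac\pi4 + l\tfrac\pi2$ corresponds to an inversion $\iota$ in the image. By first reducing $x$ and $y$ modulo $\pi$ into $(-\pi/2,\pi/2]$, and then, if necessary, reflecting once into $[-\pi/4,\pi/4]$, I can pass from an arbitrary $(x,y,z)$ with $z>0$ to a point $(x^*,y^*,z) \in X$ satisfying $T(x,y,z) = \iota^\epsilon(T(x^*,y^*,z))$ for some $\epsilon\in\{0,1\}$; a short case check shows that each step of this reduction is non-increasing on $|\cdot|$, so $|x^*|\le|x|$ and $|y^*|\le|y|$. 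Combining the $\iota$-invariance of $\rho$ with these bounds gives
\[ \rho(T_\lambda(x,y,z)) = \rho(T_\lambda(x^*,y^*,z)), \qquad \rho(x^*,y^*,z) \le \rho(x,y,z), \]
so it will suffice to establish the strict inequality on $X$; the degenerate subcase $(x^*,y^*)=(0,0)$ is immediate since $T$ sends the $z$-axis into itself, forcing $\rho(T_\lambda(x^*,y^*,z)) = 0$.

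Finally, in the base case on $X$ with $M = M(x,y) \in (0,\pi/4]$ and $z > 0$, direct substitution into \eqref{teq} gives
\[ \rho(T_\lambda(x,y,z)) = \frac{M\,\cos M\,\sin M}{\sqrt{x^2+y^2}\,\sinh z\,\cosh z}, \]
so after cancelling the factor of $M$ and clearing denominators, the desired inequality reduces to
\[ z\,\cos M\,\sin M \;<\; \sqrt{x^2+y^2}\,\sinh z\,\cosh z. \]
This I would obtain by multiplying two strict estimates: $\cos M\,\sin M < \sin M \le M \le \sqrt{x^2+y^2}$ (where the first inequality uses $\cos M < 1$ for $M > 0$, and the last uses $M = \max\{|x|,|y|\}$) together with $z < \tfrac12\sinh 2z = \sinh z\,\cosh z$ for $z > 0$. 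I expect the main, though modest, obstacle to be organising the reduction step carefully, since one needs to track both the composition of inversions appearing in the image and the non-increase of $|x|,|y|$ across the periodicity and reflection operations; once that bookkeeping is in place, the base-case inequality is a routine chain of elementary estimates.
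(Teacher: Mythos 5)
Your proposal is correct and follows essentially the same strategy as the paper: prove the inequality directly on the fundamental beam $X$ using \eqref{teq}, then reduce the general case via periodicity and reflection, observing that a single inversion in the image leaves $\rho$ invariant. The paper's reduction step is phrased slightly more economically (it notes $M(\hat{x},\hat{y})\le\pi/4<M(x,y)$ whenever $(x,y,z)\notin X$, rather than tracking $|x^*|\le|x|$, $|y^*|\le|y|$ coordinate by coordinate), and its base-case estimate is packaged as $\frac{\sin 2M}{\sinh 2z}<\frac{M}{z}$ via $\frac{\sin s}{s}<1<\frac{\sinh t}{t}$, but these are cosmetic rearrangements of the same inequalities.
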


\begin{proof}
Recall the notation $X=[-\pi/4,\pi/4]^2\times\R$. We deal first with the case when $(x,y,z)\in X$ with $z>0$ and $M(x,y)\ne0$. Observe that for $s,t>0$,
\[ \frac{\sin s}{s} < 1 < \frac{\sinh t}{t}. \]
Therefore, using \eqref{teq},
\begin{align*}
 \rho(T_\lambda(x,y,z)) &\le \frac{\cos M(x,y) \sin M(x,y)}{\sinh z \cosh z} \\
 &= \frac{\sin 2M(x,y)}{\sinh 2z} < \frac{M(x,y)}{z} = \rho(x,y,z). 
\end{align*} 

Now suppose that $(x,y,z)\in\mathbb{R}^3\setminus X$ with $z>0$, and let $(\hat{x},\hat{y},z)$ be the unique point in $X$ obtained by repeatedly reflecting $(x,y,z)$ in the planes $\{x=\frac\pi2(k+\frac12)\}$ and $\{y=\frac\pi2(l+\frac12)\}$ for $k,l\in\Z$. Since $M(\hat{x},\hat{y})\le \pi/4 < M(x,y)$ we find that
\[ 0\le\rho(\hat{x},\hat{y},z) < \rho(x,y,z). \]
By the construction of $T$, we note that if an even number of reflections are required to move from $(x,y,z)$ to $(\hat{x},\hat{y},z)$, then $T_\lambda(x,y,z)=T_\lambda(\hat{x},\hat{y},z)$, while if this number is odd, then
\[ T_\lambda(x,y,z) = \frac{T_\lambda(\hat{x},\hat{y},z)}{\|T(\hat{x},\hat{y},z)\|^2}. \]
We conclude that in either case
\[ \rho(T_\lambda(x,y,z)) = \rho(T_\lambda(\hat{x},\hat{y},z)). \]
If $M(\hat{x},\hat{y})=0$ then $\rho(T_\lambda(\hat{x},\hat{y},z))=0$ and the result follows. Otherwise, $M(\hat{x},\hat{y})\ne0$ and the proof is completed by applying the first part of the argument to $(\hat{x},\hat{y},z)\in X$.
\end{proof}

\begin{proof}[Proof of Proposition \ref{A=UHS}]
We take $\lambda>1$. Since $\mathcal{A}(\xi_0)$ is open and $T_\lambda(0,0,z)=(0,0,\lambda\tanh z)$, it is clear that
\begin{equation}
\mathcal{A}(\xi_0) \mbox{ contains a neighbourhood of } \{(0,0,z):z>0\}. \label{z-axis in A}
\end{equation}
Thus $B((0,0,\lambda),\delta)\subseteq\mathcal{A}(\xi_0)$ for some $\delta>0$. It then follows from \eqref{eq:as value} that there exists $R>0$ such that $\{(x,y,z):z>R\}\subseteq\mathcal{A}(\xi_0)$.

Write $(T_\lambda)_m$ for the $m$th component function of $T_\lambda$. We now claim that, for $z>0$,
\eqn (T_\lambda)_3(x,y,z)\ge\lambda\tanh z\ge\min\{z,\xi_0\}. \label{T_3>lambda tanh}\eqnend
The second inequality here is clear from the graph of $\lambda\tanh z$ and the definition \eqref{defn xi0}. To prove the first part of \eqref{T_3>lambda tanh}, we again let $(\hat{x},\hat{y},z)$ denote the point in $X$ obtained by reflecting $(x,y,z)$ in the planes $\{x=\frac\pi2(k+\frac12)\}$ and $\{y=\frac\pi2(l+\frac12)\}$ for $k,l\in\Z$. If an even number of reflections are needed, then
\begin{align*}
(T_\lambda)_3(x,y,z)&=(T_\lambda)_3(\hat{x},\hat{y},z) \\
&= \frac{\lambda\sinh z \cosh z}{\cos^2 M(\hat{x},\hat{y}) + \sinh^2 z} \ge\frac{\lambda\sinh z \cosh z}{1 + \sinh^2 z} = \lambda\tanh z.
\end{align*}
Otherwise, if an odd number of reflections are used, then we have that
\begin{align*}
(T_\lambda)_3(x,y,z)&=\frac{(T_\lambda)_3(\hat{x},\hat{y},z)}{\|T(\hat{x},\hat{y},z)\|^2} \\
&= \frac{\lambda\sinh z \cosh z}{\cos^2 M(\hat{x},\hat{y}) + \sinh^2 z} \cdot \frac{\cos^2 M(\hat{x},\hat{y}) + \sinh^2 z}{\sin^2 M(\hat{x},\hat{y}) + \sinh^2 z} \ge \lambda\tanh z,
\end{align*}
so that \eqref{T_3>lambda tanh} holds in either case.

Take $\x_0=(x_0,y_0,z_0)$ with $z_0>0$. We aim to prove that $\x_0\in\mathcal{A}(\xi_0)$. Write $\x_n=(x_n,y_n,z_n)=(T_\lambda)^n(\x_0)$. If, for some $n$, we have that $M(x_n,y_n)=0$ or $z_n>R$, then $\x_n\in\mathcal{A}(\xi_0)$ and we are done. So we may assume that $M(x_n,y_n)\ne0$ and 
\begin{equation}
\min\{z_0,\xi_0\}\le z_n \le R \label{z_n bdd}
\end{equation}
for all $n$, using \eqref{T_3>lambda tanh}. 

By Lemma~\ref{lem:rho}, the sequence $\rho(\x_n)$ is decreasing and so tends to a limit $\rho^*$. We show next that $\rho^*=0$. Since $z_n\le R$ and $M(x_n,y_n)=z_n\rho(\x_n)\le R\rho(\x_0)$, there exists a convergent subsequence $\x_{n_j}\to\x^*$. From \eqref{z_n bdd}, we see that the limit $\x^*$ lies in the upper half-space $\{(x,y,z):z>0\}$, and it follows that both $T_\lambda$ and $\rho$ are continuous near $\x^*$. Therefore
\[ \rho(\x^*) = \lim_{j\to\infty}\rho(\x_{n_j}) = \rho^* \]
and also
\[ \rho(T_\lambda(\x^*)) = \lim_{j\to\infty} \rho(T_\lambda(\x_{n_j})) = \lim_{j\to\infty}\rho(\x_{n_j+1}) = \rho^*. \]
These last two lines contradict Lemma \ref{lem:rho} unless $\x^*$ lies on the $z$-axis, in which case $\rho^*=\rho(\x^*)=0$ as claimed.
We have therefore shown that
\[ \lim_{n\to\infty} M(x_n,y_n) \le \lim_{n\to\infty} R\rho(\x_n) = 0. \]
Recalling \eqref{z-axis in A} and \eqref{z_n bdd}, we now see that $\x_n\in\mathcal{A}(\xi_0)$ for all large $n$. It follows that $\x_0\in\mathcal{A}(\xi_0)$ and therefore $\{(x,y,z):z>0\}\subseteq\mathcal{A}(\xi_0)$. The reverse inclusion is evident from the fact that $\{(x,y,z):z>0\}$ is completely invariant under $T_\lambda$.
\end{proof}

The next lemma will help us to handle the $\lambda=1$ case of Proposition~\ref{prop:A(0)}.

\begin{lemma}\label{V in A(0)}
If $\lambda\le 1$, then there exists $\varepsilon>0$ such that
\[ V=\{(x,y,z):M(x,y)<z/2<\varepsilon\} \subseteq \mathcal{A}(\0). \]
\end{lemma}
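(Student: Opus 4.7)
My plan is to work directly with the explicit formula \eqref{teq} by choosing $\varepsilon$ small enough (say $\varepsilon < \pi/8$) so that $V \subseteq \mathrm{int}(X)$ and hence \eqref{teq} applies throughout $V$. The proof splits into two parts: proving forward invariance $T_\lambda(V) \subseteq V$, and then deducing that every orbit in $V$ converges to $\0$.

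For forward invariance, fix $(x,y,z)\in V$ and set $M=M(x,y)$. Since $|x|,|y|\le M \le \sqrt{x^2+y^2}$, the bounds $|x|/\sqrt{x^2+y^2}\le1$ and $|y|/\sqrt{x^2+y^2}\le 1$ applied to \eqref{teq} give
\[ M\bigl((T_\lambda)_1(x,y,z),(T_\lambda)_2(x,y,z)\bigr) \le \frac{\lambda \cos M \sin M}{\cos^2 M + \sinh^2 z} = \frac{\lambda\sin(2M)}{2(\cos^2 M + \sinh^2 z)}. \]
Since $(T_\lambda)_3 = \lambda\sinh(2z)/(2(\cos^2 M + \sinh^2 z))$, the ratio of these two quantities is exactly $\sin(2M)/\sinh(2z)$; because $M<z/2$ forces $2M<z<2z$, the elementary inequalities $\sin t < t < \sinh t$ for $t>0$ yield $\sin(2M)/\sinh(2z) < 1/2$. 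This gives the horizontal condition.

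For the third coordinate, I would use $M<z/2$ to write $\cos^2 M \ge \cos^2(z/2) = (1+\cos z)/2$, then with $\sinh^2 z = (\cosh(2z)-1)/2$ the denominator is at least $(\cos z + \cosh(2z))/2$, so
\[ (T_\lambda)_3(x,y,z) \le \frac{\lambda \sinh(2z)}{\cos z + \cosh(2z)}. \]
A short Taylor computation gives $\sinh(2z) = z(\cos z + \cosh(2z)) - z^3/6 + O(z^5)$, so for sufficiently small $z>0$ the right-hand side is bounded by $\lambda z(1 - z^2/24) \le \lambda z \le z < 2\varepsilon$. Choosing $\varepsilon$ to satisfy this Taylor bound then gives $T_\lambda(V)\subseteq V$.

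For convergence, let $\x_0\in V$ and set $\x_n = (x_n,y_n,z_n) = T_\lambda^n(\x_0)$. The estimate above shows $z_{n+1} \le \lambda z_n(1-z_n^2/24)$, so $(z_n)$ is monotone decreasing with limit $z^*\ge 0$. If $\lambda<1$ then the inequality gives geometric decay and $z^*=0$; if $\lambda=1$, passing to the limit in $z_{n+1} \le z_n(1-z_n^2/24)$ yields $z^* \le z^*(1-(z^*)^2/24)$, which forces $z^*=0$. In either case $z_n\to 0$, and the preserved inequality $M(x_n,y_n)<z_n/2$ then gives $\x_n\to\0$, so $\x_0\in\mathcal{A}(\0)$.

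The main obstacle is the parabolic case $\lambda=1$, where $\0$ is not attracting and so Proposition~\ref{prop:attr} does not apply; the key saving is that the condition $M<z/2$ is preserved under iteration, which forces strict contraction in the horizontal directions even while the $z$-coordinate decays only polynomially along the $z$-axis. The required Taylor bound on $\sinh(2z)/(\cos z+\cosh(2z))$ is the technical heart of the argument.
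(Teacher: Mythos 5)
Your proof is correct and follows essentially the same path as the paper's: forward invariance of $V$ is obtained by showing that the ratio $M(x,y)/z$ decreases under $T_\lambda$ (the paper simply cites Lemma~\ref{lem:rho}, whose proof is exactly your $\sin(2M)/\sinh(2z)$ computation restricted to $X$), and the height decreases via a Taylor estimate of the form $(T_\lambda)_3 \le z - cz^3$, from which $z_n\to 0$. The only organizational differences are that the paper dispatches $\lambda<1$ immediately via Proposition~\ref{prop:attr} and appeals to the already-proved Lemma~\ref{lem:rho} rather than re-deriving the ratio bound; neither is a different method.
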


\begin{proof}
When $\lambda<1$ the result follows from Proposition \ref{prop:attr} because $\mathcal{A}(\0)$ contains a neighbourhood of $\0$.

Henceforth suppose that $\lambda=1$. For small $z$ and $0<M<z/2$, observe that
\begin{align*}
\cos^2M + \sinh^2z &= (1-M^2 + O(M^4)) + (z^2+O(z^4)) \\
&\ge 1+\frac{3z^2}{4} + O(z^4).
\end{align*}
Therefore, if $z$ is small and $M(x,y)<z/2$, then by \eqref{teq},
\begin{align*}
(T)_3(x,y,z) &= \frac{\sinh z \cosh z}{\cos^2 M(x,y) + \sinh^2 z} \\
&\le \left(z+\frac{z^3}{6} + O(z^5)\right)\left(1+\frac{z^2}{2}+O(z^4)\right)\left(1-\frac{3z^2}{4}+O(z^4)\right) \\
&= z - \frac{z^3}{12} + O(z^5).
\end{align*}
Hence we may choose $\varepsilon>0$ sufficiently small that, for all $(x,y,z)\in V$,
\begin{equation}
(T)_3(x,y,z) \le z - \frac{z^3}{24}. \label{z-z^3/24}
\end{equation}
With this choice of $\varepsilon$, Lemma \ref{lem:rho}, \eqref{defn rho} and \eqref{z-z^3/24} show that $T(V)\subseteq V$. Moreover, \eqref{z-z^3/24} implies that $(T^n)_3(x,y,z)\to 0$ as $n\to\infty$, for all $(x,y,z)\in V$. Therefore $V\subseteq\mathcal{A}(\0)$.
\end{proof}

\begin{proof}[Proof of Proposition \ref{prop:A(0)}]
We let $0<\lambda\le1$. Using Lemma \ref{V in A(0)} together with the fact that $\mathcal{A}(\0)$ is completely invariant and $T_\lambda(0,0,z)=(0,0,\lambda\tanh z)$, we see that
\begin{equation}
\mathcal{A}(\0) \mbox{ contains a neighbourhood of } \{(0,0,z):z>0\}. \label{z-axis in A(0)}
\end{equation}
We take $\x_0=(x_0,y_0,z_0)$ with $z_0>0$ and aim to prove that $\x_0\in\mathcal{A}(\0)$. Write $\x_n=(x_n,y_n,z_n)=(T_\lambda)^n(\x_0)$. Arguing as in the proof of Proposition \ref{A=UHS}, we may assume that $M(x_n,y_n)\ne0$ and $z_n\le R$ for all $n$ and some $R>0$. As before, we aim to show that the decreasing sequence $\rho(\x_n)$ tends to zero. If $\x_n\to \0$ as $n\to\infty$, then $\x_0\in\mathcal{A}(\0)$ and the proposition is proved. Otherwise, since $z_n\le R$ and $M(x_n,y_n)=z_n\rho(\x_n)\le R\rho(\x_0)$ by Lemma~\ref{lem:rho}, there must exist a convergent subsequence $\x_{n_j}\to \x^*$ with limit $\x^*$ in the upper half-space $\{(x,y,z):z>0\}$. In this case, we deduce that $\rho(\x_n)\to0$ as in the proof of Proposition \ref{A=UHS}.

Using Lemma \ref{V in A(0)}, \eqref{defn rho} and \eqref{z-axis in A(0)}, it now follows that $\x_n\in\mathcal{A}(\0)$ for all large $n$.
\end{proof}

\section{Expanding behaviour on the $(x,y)$-plane}\label{sect:expanding}

We have already seen that the $(x,y)$-plane is completely invariant under the map $T$, and in this section we focus on the restriction of $T$ to this plane. Henceforth, we shall refer to this plane as $\R^2$, but we continue to view it as the subset $\{(x,y,0)\}$ of $\R^3$. For balls in $\R^2$ we use the notation $B^2({\bf a},r) = \{\x\in\R^2:\|\x-{\bf a}\|<r\}$. We may sometimes drop the third co-ordinate for brevity, in which case we identify the points $(x,y)=(x,y,0)$.

Using this identification, we define the map $F:\R^2\to\R^2\cup\{\infty\}$ by
\eqn F(x,y)=T(x,y,0) = \left((T)_1(x,y,0),(T)_2(x,y,0)\right) \label{defn F} \eqnend
and we put $F_\lambda (x,y)=\lambda F(x,y)$. We shall be interested in points at which $F_\lambda$ is locally uniformly expanding. For any point $\x$ at which the two-dimensional derivative $DF(\x)$ exists, we write
\[ l(DF(\x)) = \inf_{\|{\bf h}\|=1} \|DF(\x)({\bf h})\|. \]

We make one more definition before stating our next lemma. Let 
\eqn P = \left\{\left(\frac{(n+m)\pi}{2}, \frac{(n-m+1)\pi}{2}, 0 \right) : m,n\in\Z \right\} \label{defn P} \eqnend
denote the common set of poles of $T_\lambda$ and $F_\lambda$.

\begin{lemma}\label{lem:l(DF)}
Let $\lambda>0$. Then, for almost every $\x\in\R^2$,
\[ l(DF_\lambda(\x))\ge\lambda/\sqrt{2}. \] 
Moreover, there exists $\delta>0$ such that, for any $\p\in P$,
\[ l(DF_\lambda(\x))\ge 2 \]
almost everywhere on $B^2(\p,\delta)$.
\end{lemma}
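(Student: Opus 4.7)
The plan is to exploit the chain-rule factorisation $T=(A\circ Z)\circ 2\mathrm{Id}$ from Section~\ref{sect:Constr T}, which reduces a bound on $l(DF_\lambda)$ to one on the auxiliary map $h$. Restricting to the $(x,y)$-plane, $F(x,y)=A(h(2x,2y))$, where $h(2x,2y)$ lies on the upper hemisphere and $A$ is a sense-preserving M\"obius map of $\R^3$, hence conformal. Writing $DA(\y)=c(\y)R(\y)$ with $R\in SO(3)$ and $c(\y)=2/\|\y+(0,0,1)\|^2$, a direct computation using $\|h(u,v)+(0,0,1)\|^2=2(1+\cos M(u,v))=4\cos^2(M(u,v)/2)$ together with $M(2x,2y)=2M(x,y)$ gives $c(h(2x,2y))=1/(2\cos^2M(x,y))$. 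Since $R$ preserves singular values, the chain rule yields
\[
l(DF_\lambda(x,y))\;=\;2\lambda\, c(h(2x,2y))\cdot l(Dh(2x,2y))\;=\;\frac{\lambda\,l(Dh(2x,2y))}{\cos^2M(x,y)},
\]
where $Dh$ is the $3\times 2$ Jacobian.

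The first statement of the lemma now reduces to the pointwise bound $l(Dh(u,v))\ge\cos^2(M(u,v)/2)/\sqrt 2$ almost everywhere, since on substituting $(u,v)=(2x,2y)$ we have $M(u,v)/2=M(x,y)$ and the $\cos^2$-factors cancel. By the obvious symmetries of $h$ (coordinate swap, sign changes, and the reflections in $\{v=\pm u\}$) it suffices to work on the triangle $T_0=\{0\le v\le u\le\pi/2\}$, where $M=u$ and
\[
h(u,v)=\left(\frac{u\sin u}{\rho},\;\frac{v\sin u}{\rho},\;\cos u\right),\qquad \rho=\sqrt{u^2+v^2}.
\]
On $T_0$ the matrix $Dh^T Dh$ is explicit; its trace and determinant are elementary functions of $u,v$, and estimating the smaller eigenvalue via $\mu_-\ge\det/\mathrm{tr}$ (sharpening the estimate where needed) is intended to deliver the required inequality.

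For the second statement, a pole $\p\in P$ arises from the zero $\0$ of $F$ by an odd number of coordinate reflections followed by the planar inversion $\iota({\bf u})={\bf u}/\|{\bf u}\|^2$. Writing $R$ for the isometry sending $\p$ to $\0$ and $\widetilde F$ for the fundamental-square formula, $F=\iota\circ\widetilde F\circ R$ on a neighbourhood of $\p$. Both $R$ and $D\iota({\bf u})$ are (scalar multiples of) isometries, so the chain rule reads $l(DF_\lambda(\x))=\lambda\,l(D\widetilde F(R\x))/\|\widetilde F(R\x)\|^2$. The first part provides $l(D\widetilde F(R\x))\ge 1/\sqrt 2$ a.e., and since $\widetilde F$ is continuous with $\widetilde F(\0)=\0$, any $\delta>0$ small enough that $\|\widetilde F(R\x)\|^2\le\lambda/(2\sqrt 2)$ on $B^2(\p,\delta)$ yields the lower bound $2$. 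The principal obstacle will be the explicit singular value calculation for $Dh$ on the triangle $T_0$; particular care is needed along the diagonal $v=u$ where $h$ is only directionally differentiable, though this set has measure zero and so does not affect the almost-everywhere conclusion.
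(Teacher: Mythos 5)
Your conformal-factor factorization is a genuinely different route from the paper's. The paper differentiates the explicit planar formula for $F$ on the triangle $0<y<x<\pi/4$, reads off the two (non-orthogonal) eigenvalues of $DF$, and then passes to the conclusion $l(DF)\ge1/\sqrt2$; outside $X_0$ it treats the odd-reflection squares separately via $G=H\circ F$. You instead pull back through $F=A\circ h(2\cdot)$ and use the conformal factor of $A$ to reduce to a bound on $l(Dh)$. Your identity $l(DF_\lambda(x,y))=\lambda\,l(Dh(2x,2y))/\cos^2 M(x,y)$ on $X_0$ is correct, and the computation $\|h(u,v)+(0,0,1)\|^2=4\cos^2(M(u,v)/2)$ checks out; a real virtue of your route is that you work with the symmetric matrix $Dh^T Dh$, whose eigenvalues genuinely are the squared singular values, whereas the paper estimates eigenvalues of the nonsymmetric $DF$ and treats them as if they controlled $l(DF)$.

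There are two genuine gaps. First, the decisive inequality $l(Dh(u,v))\ge\cos^2(M(u,v)/2)/\sqrt2$ is only asserted, and the method you sketch will not deliver it. On $T_0$ one finds $\det(Dh^TDh)=u^2\sin^2u/\rho^4$ and $\mathrm{tr}(Dh^TDh)=(\sin^2 u+\rho^2)/\rho^2$ with $\rho^2=u^2+v^2$, so $\mu_-\ge\det/\mathrm{tr}$ gives only $\sigma_2^2\ge u^2\sin^2u/(\rho^2(\sin^2u+\rho^2))$, which is of order $u^2$ near the origin while the target $\cos^4(u/2)/2$ tends to $1/2$ there; the phrase ``sharpening where needed'' is hiding essentially all of the work. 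Worse, an explicit computation near the corner $(u,v)=(\pi/2,\pi/2)$ (that is, $(x,y)$ near $(\pi/4,\pi/4)$) yields $\sigma_2(Dh)\approx0.30$, which is \emph{below} the target $\cos^2(\pi/4)/\sqrt2\approx0.35$, so the inequality you are aiming for appears to fail on an open set. Since your target is equivalent, via your own identity, to $l(DF)\ge1/\sqrt2$ on $X_0$, this is the same obstruction the paper faces: the second column of $DF$ has norm $x\tan x/(x^2+y^2)$, which tends to $2/\pi<1/\sqrt2$ at $(\pi/4,\pi/4)$, so $l(DF)\le2/\pi$ there even though both eigenvalues of $DF$ exceed $1/\sqrt2$. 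You should therefore check whether the constant $\sqrt2$ can be attained at all before investing more in this computation. Second, and independently, the formula $F=A\circ h(2\cdot)$ holds only on $X_0$; the symmetries of $h$ you invoke reduce $[-\pi/2,\pi/2]^2$ to $T_0$ but say nothing about $\R^2\setminus X_0$, where for odd-reflection squares $F$ picks up the inversion $H$ in the image. This case must be addressed for the ``a.e.\ $\x\in\R^2$'' claim; a quick fix is to note that $\|F\|\le1$ on $X_0$, so the inversion only increases $l$, but it must be stated.
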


\begin{proof}
We begin by estimating the derivative $DF(\x)$ on the square $X_0=X\cap\R^2=[-\pi/4,\pi/4]^2$. Let us initially assume that $(x,y)\in X_0$ with $0<y<x<\pi/4$, so that
\[ F(x,y)= \left (  \frac{x\tan x}{\sqrt{x^2+y^2}} , \frac{y \tan x}{\sqrt{x^2+y^2}} \right)\]
by \eqref{teq} and \eqref{defn F}. Then the derivative of $F$ at $(x,y)$ is
\[ DF(x,y) = (x^2+y^2)^{-3/2} \left ( \begin{array}{cc}
y^2 \tan x + x(x^2+y^2)(1+\tan ^2 x) & -xy \tan x \\
-xy \tan x +y(x^2+y^2)(1+\tan ^2 x) & x^2 \tan x \\
\end{array}
\right ).\]
The eigenvalues of this matrix are
\[ \mu_1 = \frac{\tan x}{\sqrt{x^2+y^2}} \quad\mbox{and}\quad \mu_2 = \frac{x (1+\tan^2 x)}{\sqrt{x^2+y^2}}, \]
and since $0<y<x<\pi/4$ we have that $\mu_1\ge1/\sqrt{2}$ and $\mu_2\ge1/\sqrt{2}$. Similar calculations show that wherever $DF$ is defined on $X_0$, its eigenvalues are at least $1/\sqrt{2}$. Hence
\[ l(DF_{\lambda}) \geq \lambda / \sqrt{2} \]
almost everywhere in $X_0$.

Consider next $(x,y)\in\R^2$ and, as in Section~\ref{sect:attr}, let $(\hat{x},\hat{y})$ denote the unique point of $X_0$ obtained by repeatedly reflecting $(x,y)$ in the lines $\{x=\frac\pi2(k+\frac12)\}$ and $\{y=\frac\pi2(l+\frac12)\}$ for $k,l\in\Z$. Note that the mapping $(x,y)\mapsto(\hat{x},\hat{y})$ is locally an isometry almost everywhere in $\R^2$. In the case that $(\hat{x},\hat{y})$ is obtained by an even number of reflections, then by \eqref{defn F} and the construction of $T$, we have that $F(x,y)=F(\hat{x},\hat{y})$ and thus we may estimate $DF(x,y)$ by the above argument. In the remaining case, when $(\hat{x},\hat{y})$ is obtained by an odd number of reflections, we find that $F(x,y)= (H\circ F)(\hat{x},\hat{y})$ where $H(\x)=\x/\|\x\|^2$ is inversion in the unit circle. We write $G=H\circ F$ so that in this case, where defined, 
\[ l(DF_{\lambda}(x,y))=\lambda l(DG(\hat{x},\hat{y})). \]
Hence, to prove the first part of the lemma it will now suffice to estimate $DG$ on $X_0$. To this end, we again initially consider $(x,y)\in X_0$ with $0<y<x<\pi/4$. Then
\[ G(x,y) = \left ( \frac{x \cot x}{\sqrt{x^2+y^2}} ,  \frac{y \cot x}{\sqrt{x^2+y^2}} \right ), \]
and we calculate that
\[ DG(x,y) =
(x^2 +y^2)^{-3/2} \left ( \begin{array}{cc}
y^2 \cot x - x(x^2+y^2)\mathop{\rm cosec}^2 x & -xy \cot x \\
-xy \cot x - y(x^2+y^2)\mathop{\rm cosec}^2 x & x^2 \cot x \\
\end{array} \right ). \]
The eigenvalues of $DG$ are
\eqn \mu_3 = \frac{\cot x}{\sqrt{x^2+y^2}} \quad\mbox{and}\quad \mu_4 = \frac{-x\mathop{\rm cosec}^2 x}{\sqrt{x^2+y^2}}, \label{mu3mu4} \eqnend
from which it follows that $|\mu_3|\ge4/(\pi\sqrt{2})$ and $|\mu_4|\ge2/\sqrt{2}$ because $0<y<x<\pi/4$. Similar calculations yield the same eigenvalue estimates at almost every point $(x,y)\in X_0$. Therefore we certainly have that
\[ l(DG(x,y)) \ge 4/(\pi\sqrt{2}) > 1/\sqrt{2} \]
almost everywhere in $X_0$. This establishes the first part of the lemma.

To prove the second statement in the lemma, first observe that if $\p=(p_x,p_y)$ then $(\widehat{p_x},\widehat{p_y})=(0,0)$ and that $F_\lambda(x,y)=\lambda G(\hat{x},\hat{y})$ for all $(x,y)$ in some neighbourhood of $\p$. Given $\lambda>0$, it follows from calculations similar to those leading to \eqref{mu3mu4} that there exists $\delta>0$ such that $|\mu_3|\ge2/\lambda$ and $|\mu_4|\ge2/\lambda$ on $B^2(\0,\delta)$, where $\mu_3$ and $\mu_4$ are again the eigenvalues of $DG$ wherever this is defined. Therefore, $l(DG(\hat{x},\hat{y}))\ge2/\lambda$ almost everywhere on $B^2(\0,\delta)$ and the result follows.
\end{proof}

For $\p\in P$, let $W(\p)$ be the set of points in the $(x,y)$-plane that are nearer to $\p$ than to any other pole. That is,
\[ W(\p) = \{(x,y): \|(x,y)-\p\|<\|(x,y)-\q\| \mbox{ for all } \q\in P\setminus\{\p\}\}; \]
equivalently, writing $\p=(p_x,p_y)$ and recalling \eqref{defn P},
\[ W(\p) = \{(x,y): |x-p_x|+|y-p_y|<\pi/2\}. \]
Theorem \ref{thm:attr} shows that $I(T_\lambda)\subseteq\R^2$. Using \eqref{defn P}, the periodicity of $T_\lambda$, and the fact that $T_\lambda$ maps $\{(x,\pm x,0):x\in\R\}$ into a bounded part of itself, it follows that
\begin{equation}
I(T_\lambda)\subseteq \bigcup_{\p\in P}W(\p). \label{I in UWp}
\end{equation}

\begin{lemma}\label{lem:Sq}
Let $\lambda>0$. For each $\q\in P$ there exists a branch of the inverse of $F_\lambda$ that takes values in $W(\q)$. More precisely, we can define a continuous function
\[ S_\q: (\R^2\cup\{\infty\})\setminus\left\{(x,\pm x) : |x|\le\frac{\lambda}{\sqrt{2}}\right\} \to W(\q) \]
such that $S_\q\circ F_\lambda$ is the identity on $W(\q)$ and $F_\lambda\circ S_\q$ is the identity on the domain of $S_\q$.

In particular, for any $\p,\q\in P$, the function $S_\q$ is defined on $W(\p)$ and so each point of $W(\p)$ has exactly one pre-image under $F_\lambda$ lying in $W(\q)$. Moreover, given $\varepsilon>0$, there exists $R>0$ such that if $\|\p\|>R$ then $S_\q(W(\p))\subseteq B^2(\q,\varepsilon)$.
\end{lemma}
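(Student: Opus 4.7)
The plan is to show that for each $\q\in P$, the restriction $F_\lambda|_{W(\q)}$ is a homeomorphism onto $(\R^2\cup\{\infty\})\setminus D$ with $D=\{(x,\pm x):|x|\le\lambda/\sqrt{2}\}$, and then to take $S_\q$ to be its inverse. This will immediately give the existence and continuity of $S_\q$ together with the two identity relations. The observation that $D$ lies entirely in the union of Voronoi edges (so $W(\p)\cap D=\emptyset$ for every $\p\in P$) will give the assertion that $S_\q$ is defined on $W(\p)$, and the final asymptotic statement will follow from continuity of $S_\q$ at $\infty$.

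First I would handle the case $\q=(0,\pi/2)$. The diamond $W(\q)$ decomposes as the central small square $X_\q=[-\pi/4,\pi/4]\times[\pi/4,3\pi/4]$ (centred at the pole $\q$) together with four triangular wedges cut from the small squares around the four neighbouring zeros that are corners of the diamond. Using \eqref{teq} on $X_0=[-\pi/4,\pi/4]^2$ together with the homeomorphism property stated at the end of Section~\ref{sect:T} (restricted to the $(x,y)$-plane), one obtains that $F|_{X_0}$ is a homeomorphism of $X_0$ onto $\overline{B^2(\0,1)}$ which sends each of the four triangles of $X_0$ (distinguished by which of $\pm x,\pm y$ equals $M(x,y)$) homeomorphically onto one of the four closed sectors of angular width $\pi/2$ bounded by the diagonals $v=\pm u$ in the target plane. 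By the reflection-and-inversion construction of $T$, the map $F$ acts on $X_\q$ as $H\circ(F|_{X_0})\circ R$ where $H(\x)=\x/\|\x\|^2$ and $R$ is the reflection in $\{y=\pi/4\}$; hence $F_\lambda$ is a homeomorphism of $X_\q$ onto $\{w:\|w\|\ge\lambda\}\cup\{\infty\}$ with $\q\mapsto\infty$. Each of the four triangular wedges, reached via two reflections or a period translation so that the inversions cancel, is an isometric copy of one of the triangles of $X_0$ and is therefore mapped homeomorphically by $F_\lambda$ onto one of the four sectors of $\overline{B^2(\0,\lambda)}$.

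Second, I would glue these five homeomorphisms together. The four sides of $X_\q$ lie in $W(\q)$ and map onto the circle $\|w\|=\lambda$; continuity of $F_\lambda$ ensures agreement on these shared edges with the neighbouring wedges, and a symmetry check confirms that the four wedges cover the four distinct sectors of the closed disk of radius $\lambda$. This exhibits $F_\lambda|_{W(\q)}$ as a continuous bijection onto $(\R^2\cup\{\infty\})\setminus D$: the set $D$ arises as the image of $\partial W(\q)$, since each diagonal Voronoi edge of $W(\q)$ is doubly mapped onto a segment $\{(t,\pm t):|t|\le\lambda/\sqrt{2}\}$. Because $F_\lambda$ is quasiregular and injective on $W(\q)$, it is open there, so $S_\q:=(F_\lambda|_{W(\q)})^{-1}$ is automatically continuous. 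For arbitrary $\q\in P$ the construction carries over by translating via the periods $(\pi,0),(0,\pi)$ of $F_\lambda$ and, if needed, applying the symmetry $(x,y)\leftrightarrow(y,x)$, which interchanges the two cosets of $P$ modulo the period lattice.

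For the final statement, since $S_\q(\infty)=\q$ and $S_\q$ is continuous on $(\R^2\cup\{\infty\})\setminus D$, given $\varepsilon>0$ there exists $R_1>0$ such that $S_\q(\{w:\|w\|>R_1\}\cup\{\infty\})\subseteq B^2(\q,\varepsilon)$. Since $W(\p)\subseteq B^2(\p,\pi/2)$, setting $R=R_1+\pi/2$ ensures $\|w\|>R_1$ for every $w\in W(\p)$, and the conclusion follows. The main obstacle is the gluing step: carefully tracking how the five pieces of $W(\q)$ map under $F_\lambda$ to verify that they tile $(\R^2\cup\{\infty\})\setminus D$ bijectively, with $\partial W(\q)$ accounting for the omitted set $D$ exactly.
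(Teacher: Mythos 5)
Your proof is correct, and it takes a genuinely different route from the paper's. The paper's strategy is to choose a fundamental domain
\[
E = \left\{(x,y)\in X_0:x\ne\pm y\right\} \cup \left(\left(\tfrac{\pi}{4},\tfrac{3\pi}{4}\right)\times\left(-\tfrac{\pi}{4},\tfrac{\pi}{4}\right)\right)
\]
for the even-reflection subgroup, show that every point of $W(\q)$ corresponds to a unique point of $E$ under even reflections (and hence has the same $T$-image), and then verify bijectivity of $T|_E$ onto $D$ by factoring $T=A\circ Z(2\cdot)$ and quoting that $Z(2\cdot)$ is a bijection of $E$ onto $S(\0,1)\setminus\{x=\pm y,\ z\ge 0\}$ and that the M\"obius map $A$ carries that set bijectively onto $D$. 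You instead decompose $W(\q)$ in place into five pieces (the central odd square $X_\q$ plus four even-square wedges), show each piece maps homeomorphically onto its claimed target using the explicit formula \eqref{teq} together with the Section~\ref{sect:T} homeomorphism fact, and glue along the circle $\|w\|=\lambda$. Both are correct; the paper's reduction to $Z$ and $A$ is slicker and keeps the bookkeeping on the (punctured) sphere, while yours is more elementary and self-contained at the cost of having to track five pieces and verify agreement on the shared edges. Your handling of the remaining assertions also differs slightly in phrasing but not in substance: you obtain the asymptotic statement from continuity of $S_\q$ at $\infty$ (with $S_\q(\infty)=\q$) together with $W(\p)\subseteq B^2(\p,\pi/2)$, whereas the paper observes directly that $F_\lambda$ is bounded on $W(\q)\setminus B^2(\q,\varepsilon)$ uniformly in $\q$; these are equivalent. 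Two small points worth noting: the Section~\ref{sect:T} homeomorphism is stated for the open interior $(-\pi/4,\pi/4)^2\times\R$, so the extension to the closed square $X_0\to\overline{B^2(\0,1)}$ needs a brief boundary check (which your sketch implicitly assumes, and which is straightforward); and the remark that ``$D$ arises as the image of $\partial W(\q)$'' is a helpful sanity check but is not logically required once you have exhibited the five pieces as a tiling of $(\R^2\cup\{\infty\})\setminus D$.
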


\begin{proof}
Without loss of generality we may take $\lambda=1$. Let $X_0=[-\pi/4,\pi/4]^2$ as before and let 
\[ D= (\R^2\cup\{\infty\})\setminus\left\{(x,\pm x) : |x|\le\frac{1}{\sqrt{2}}\right\}. \]
We claim that $T$ maps $W(\q)$ bijectively onto $D$ and prove this as follows. To each point of $W(\q)$ corresponds a unique point of 
\[ E = \left\{(x,y)\in X_0:x\ne\pm y\right\} \cup \left(\left(\frac{\pi}{4},\frac{3\pi}{4}\right)\times\left(-\frac{\pi}{4},\frac{\pi}{4}\right)\right) \]
obtained by an even number of reflections in the lines $\{x=\frac\pi2(k+\frac12)\}$ and $\{y=\frac\pi2(l+\frac12)\}$ for $k,l\in\Z$. These corresponding pairs of points have the same image under $T$. Hence the claim is equivalent to $T$ being a one-to-one map from $E$ onto $D$. 

Recall the Zorich mapping $Z$ as defined in Section~\ref{sect:Constr T}. It is not difficult to see that the mapping $\x\mapsto Z(2\x)$ is a bijection from $E$ onto
\[ S(\0,1)\setminus\{(x,y,z):x=\pm y, z\ge0\}, \]
where $S(\0,1)$ is the unit sphere in $\R^3$. The M\"{o}bius map $A$ from Section~\ref{sect:Constr T} maps this last set bijectively onto $D$. Thus the claim is now proved by recalling \eqref{defn T}.

From \eqref{defn F} and the above, it follows that we may define inverse branches $S_\q$ as described in the lemma. In fact, using the calculations from the proof of Lemma~\ref{lem:l(DF)}, it can be shown that $F:W(\q)\to D$ is quasiconformal, which implies that the inverse functions $S_\q$ are also quasiconformal; see for example \cite[Corollary II.6.5]{R}.

To prove the final assertion of the lemma, we simply observe that $F_\lambda$ is bounded on $W(\q)\setminus B^2(\q,\varepsilon)$, with this bound independent of the choice of $\q\in P$.
\end{proof}

The next result applies the previous two lemmas to demonstrate that $F_\lambda$ is uniformly expanding in a neighbourhood of any pole.

\begin{lemma}\label{expanding on B}
Given $\lambda>0$, there exists $\varepsilon\in(0,\pi/4)$ such that, for any $\p\in P$ and ${\bf a},{\bf b}\in B^2(\p,\varepsilon)$,
\[ \|F_\lambda({\bf a})-F_\lambda({\bf b})\|\ge 2\|{\bf a}-{\bf b}\|. \]
\end{lemma}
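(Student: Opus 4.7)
My plan is to convert the pointwise almost-everywhere derivative estimate $l(DF_\lambda)\ge 2$ on $B^2(\p,\delta)$ supplied by Lemma~\ref{lem:l(DF)} into the required integrated lower bound, by exploiting the inversion-like local structure of $F_\lambda$ at its pole. Since every pole $\p\in P$ reflects to $\0$ through an odd number of reflections in the lines $\{x=\tfrac{\pi}{2}(k+\tfrac12)\}$ and $\{y=\tfrac{\pi}{2}(l+\tfrac12)\}$, the construction of $T$ gives $F_\lambda(\x)=\lambda H(F(\hat\x))$ for $\x$ near $\p$, where $H(\y)=\y/\|\y\|^2$ is inversion in the unit circle and $\x\mapsto\hat\x$ is the reflection sending $\p$ to $\0$. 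I would choose $\varepsilon\in(0,\min\{\delta,\pi/4\})$ small enough that $B^2(\p,\varepsilon)$ avoids all of these reflection lines and lies inside $W(\p)$, so that $\hat{\cdot}$ is a single isometry on $B^2(\p,\varepsilon)$ and in particular $\|{\bf a}-{\bf b}\|=\|\hat{\bf a}-\hat{\bf b}\|$.

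The classical inversion distance identity $\|H(\y_1)-H(\y_2)\|=\|\y_1-\y_2\|/(\|\y_1\|\,\|\y_2\|)$, combined with the explicit formula $F(\x)=(\tan M(\x)/\|\x\|)\,\x$ valid on $X_0$ (which gives $\|F(\hat\x)\|=\tan M(\hat\x)\le\tan\varepsilon$), then yields
\[
\|F_\lambda({\bf a})-F_\lambda({\bf b})\|=\frac{\lambda\,\|F(\hat{\bf a})-F(\hat{\bf b})\|}{\|F(\hat{\bf a})\|\,\|F(\hat{\bf b})\|}\ge\frac{\lambda}{\tan^2\varepsilon}\,\|F(\hat{\bf a})-F(\hat{\bf b})\|.
\]
The lemma therefore reduces to a bi-Lipschitz lower bound $\|F(\hat{\bf a})-F(\hat{\bf b})\|\ge c_0\|\hat{\bf a}-\hat{\bf b}\|$ on $B^2(\0,\varepsilon)$ with some constant $c_0>0$ independent of $\varepsilon$; then any $\varepsilon$ satisfying $2\tan^2\varepsilon\le\lambda c_0$ will finish the proof.

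The hard part will be establishing this bi-Lipschitz lower bound for $F$ in a neighbourhood of $\0$, where $F$ is not differentiable in the usual sense. My approach is to approximate $F$ by the positively homogeneous map $\Psi(\x)=(M(\x)/\|\x\|)\,\x$, which is the leading-order term of $F$ via $\tan t=t+O(t^3)$. Differentiating the explicit formula for $F$ in each of the four octants separated by the rays $y=\pm x$ gives $\|D(F-\Psi)(\x)\|=O(\|\x\|^2)$, so that $F-\Psi$ is $O(\varepsilon^2)$-Lipschitz on $B^2(\0,\varepsilon)$ by integration along line segments in that convex ball. It then suffices to show $\Psi$ is globally bi-Lipschitz on $\R^2$: on each open octant $\Psi$ is smooth with smallest singular value bounded below by a positive constant (a computation close to the one in the proof of Lemma~\ref{lem:l(DF)}), and continuity of $\Psi$ together with its ray-preserving structure allows one to piece together these octantwise estimates — by splitting any segment $[\hat{\bf a},\hat{\bf b}]$ at its crossings of the diagonals $y=\pm x$ and summing — to obtain the required uniform lower bound.
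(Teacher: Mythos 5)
Your reduction via the inversion distance identity is a nice idea: since near any pole $F_\lambda=\lambda H\circ F\circ(\text{reflection})$ and $\|H(\y_1)-H(\y_2)\|=\|\y_1-\y_2\|/(\|\y_1\|\,\|\y_2\|)$, you correctly convert the problem into a uniform lower Lipschitz bound for $F$ on a small ball about the origin, and the approximation of $F$ by the $1$-homogeneous map $\Psi(\x)=(M(\x)/\|\x\|)\x$, together with the $O(\varepsilon^2)$-Lipschitz estimate for $F-\Psi$, is also sound. The genuine gap is in the final step: showing that $\Psi$ is co-Lipschitz \emph{globally} ``by splitting any segment $[\hat{\bf a},\hat{\bf b}]$ at its crossings of the diagonals $y=\pm x$ and summing'' does not work. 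Lower Lipschitz bounds do not compose additively along a subdivided path: if the segment crosses a diagonal at ${\bf c}$, you know $\|\Psi(\hat{\bf a})-\Psi({\bf c})\|\ge c_0\|\hat{\bf a}-{\bf c}\|$ and $\|\Psi({\bf c})-\Psi(\hat{\bf b})\|\ge c_0\|{\bf c}-\hat{\bf b}\|$, but the triangle inequality only gives $\|\Psi(\hat{\bf a})-\Psi(\hat{\bf b})\|\le\|\Psi(\hat{\bf a})-\Psi({\bf c})\|+\|\Psi({\bf c})-\Psi(\hat{\bf b})\|$ --- the wrong direction. Nothing in the stated argument rules out the image folding back, making $\|\Psi(\hat{\bf a})-\Psi(\hat{\bf b})\|$ much smaller than the sum of the pieces. (The map $\Psi$ is in fact bi-Lipschitz --- it is the radial scaling $r\mapsto rg(\theta)$ with $g(\theta)=\max\{|\cos\theta|,|\sin\theta|\}$ Lipschitz and bounded away from $0$ --- but proving this requires an argument that is not local-plus-subdivision.)

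The paper sidesteps this difficulty by reversing the direction of the estimate. Instead of trying to integrate a lower bound on $DF_\lambda$ along a segment in the domain (where lower bounds do not compose), it works with the inverse branch $S_\p$ from Lemma~\ref{lem:Sq} and integrates an \emph{upper} bound $\|DS_\p\|=1/l(DF_\lambda)\le\tfrac12$ along the straight line segment joining $F_\lambda({\bf a})$ to $F_\lambda({\bf b})$ in the image. Upper Lipschitz bounds do compose along paths, so this gives $\|{\bf a}-{\bf b}\|=\|S_\p(F_\lambda({\bf a}))-S_\p(F_\lambda({\bf b}))\|\le\tfrac12\|F_\lambda({\bf a})-F_\lambda({\bf b})\|$ directly. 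The only geometric input needed is that $\varepsilon$ can be chosen small enough that this image segment stays in the region where $\|DS_\p\|\le\tfrac12$; this holds because both endpoints are far from the origin while their separation can be assumed at most $\pi$. If you want to salvage your approach, you would need to argue co-Lipschitzness of $\Psi$ by some route other than subdivision (e.g.\ directly from the explicit polar form of $\Psi$, or by pulling back straight segments in the image as the paper does).
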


\begin{proof}
Let $\delta>0$ be as given by Lemma \ref{lem:l(DF)} and let $\p\in P$. Choose $R_1>\lambda/\sqrt{2}$ sufficiently large that 
\eqn \{{\bf y}\in\R^2:\|{\bf y}\|>R_1\}\subseteq F_\lambda(B^2(\p,\delta)) \label{defn R1} \eqnend
and find $\varepsilon\in(0,\pi/4)$ such that $\|F_\lambda(\x)\|>2R_1$ whenever $\|\x-\p\|<\varepsilon$. Observe that both $R_1$ and $\varepsilon$ are independent of the choice of $\p\in P$.

Now take ${\bf a},{\bf b}\in B^2(\p,\varepsilon)$ and note that $\|F_\lambda({\bf a})\|,\|F_\lambda({\bf b})\| >2R_1$. Since $\|{\bf a}-{\bf b}\|<\pi/2$, we may assume that $\|F_\lambda({\bf a})-F_\lambda({\bf b})\|<\pi$. Hence, as $R_1$ is large, the line segment joining $F_\lambda({\bf a})$ to $F_\lambda({\bf b})$ must lie in $\{\|{\bf y}\|>R_1\}$. Therefore by \eqref{defn R1}, if ${\bf y}$ is a point on this line segment then $\x:=S_\p({\bf y})\in B^2(\p,\delta)$ and so
\[ \|DS_\p({\bf y})\| = \frac{1}{l(DF_\lambda(\x))} \le \frac12 \]
by Lemma \ref {lem:l(DF)}. It then follows by integration that
\[ \|{\bf a}-{\bf b}\| = \|S_\p(F_\lambda({\bf a})) - S_\p(F_\lambda({\bf b}))\| \le \frac12\|F_\lambda({\bf a}) - F_\lambda({\bf b})\|.  \qedhere \]
\end{proof}

\begin{lemma}\label{lem:diams}
Let $\lambda>0$ and take $\p\in P$ and $U\subseteq W(\p)$. Then for any component $V$ of $F_\lambda^{-1}(U)$, we have $V\subseteq W(\q)$ for some $\q\in P$ and
\[ \diam V \le \frac{\sqrt{2}}{\lambda}\diam U. \]
\end{lemma}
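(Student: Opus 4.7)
The plan is to combine the inverse-branch structure from Lemma \ref{lem:Sq} with the pointwise expansion from Lemma \ref{lem:l(DF)}. More precisely, I will identify the component $V$ as the image under a single branch $S_\q$ of a connected subset of $U$, and then convert the almost-everywhere bound $l(DF_\lambda)\ge\lambda/\sqrt{2}$ into a Lipschitz estimate on $S_\q$ over a convex set containing $U$.

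For the structural step, note that the open $\ell^1$-diamonds $W(\q)$, $\q\in P$, tile $\R^2$ modulo the diagonal grid $G=\bigcup_{\q\in P}\partial W(\q)$, which is the union of the lines $y=\pm x+k\pi$, $k\in\Z$. Since $W(\p)$ is open with boundary in $G$, we have $U\cap G=\emptyset$; in particular $U$ lies in the common domain of every inverse branch $S_\q$ produced by Lemma \ref{lem:Sq}. I next need $F_\lambda^{-1}(U)\cap G=\emptyset$, equivalently $F_\lambda(G)\cap U=\emptyset$. A direct computation from \eqref{teq} gives $F(t,t)=(\tan t/\sqrt{2},\tan t/\sqrt{2})$ for $t\in[-\pi/4,\pi/4]$, and the reflection symmetries together with the double periodicity of $T$ propagate this to
\[ F_\lambda(G)\subseteq\left\{(x,\pm x):|x|\le\lambda/\sqrt{2}\right\}\subseteq G. \]
With both facts in hand, Lemma \ref{lem:Sq} shows that $F_\lambda^{-1}(U)$ decomposes as the disjoint union of the open sets $S_\q(U)\subseteq W(\q)$, $\q\in P$; by connectedness, $V=S_\q(C)$ for a single $\q\in P$ and a single component $C$ of $U$, so in particular $V\subseteq W(\q)$.

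For the diameter bound, Lemma \ref{lem:l(DF)} and the chain rule yield $\|DS_\q(\y)\|\le\sqrt{2}/\lambda$ almost everywhere on the domain of $S_\q$. Since $W(\p)$ is a convex open $\ell^1$-ball and $S_\q$ is quasiconformal (hence ACL) on a set containing $W(\p)$, a standard Fubini argument---approximating the segment $[\y_1,\y_2]$ by nearby parallel segments on which $S_\q$ is absolutely continuous and then invoking continuity---upgrades the almost-everywhere estimate to the Lipschitz bound $\|S_\q(\y_1)-S_\q(\y_2)\|\le(\sqrt{2}/\lambda)\|\y_1-\y_2\|$ for all $\y_1,\y_2\in W(\p)$. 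Applying this to pairs of points in $C\subseteq U$ and taking suprema gives $\diam V=\diam S_\q(C)\le(\sqrt{2}/\lambda)\diam U$, as required. The most delicate point is the invariance $F_\lambda(G)\subseteq G$; conceptually this is simply the boundary correspondence of the quasiconformal bijection between $W(\q)$ and the domain of $S_\q$, but a rigorous verification appears to require either a general boundary-behaviour theorem for planar quasiconformal homeomorphisms or the explicit periodicity-and-reflection computation sketched above.
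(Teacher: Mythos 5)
Your proposal is correct and follows essentially the same route as the paper: the structural step rests on the invariance $F_\lambda\bigl(\bigcup_\q\partial W(\q)\bigr)\subseteq\{(x,\pm x):|x|\le\lambda/\sqrt{2}\}$ (this is exactly the paper's observation \eqref{eqn:L}, stated there via periodicity and the diagonal-invariance of $F_\lambda$), which forces each component of $F_\lambda^{-1}(U)$ into a single $W(\q)$; and the diameter estimate comes from $\|DS_\q\|\le\sqrt{2}/\lambda$ via Lemma~\ref{lem:l(DF)} together with convexity of $W(\p)$. You spell out two points the paper leaves implicit (that $F_\lambda^{-1}(U)$ decomposes as the disjoint union $\bigsqcup_\q S_\q(U)$, and the ACL/Fubini argument that upgrades the a.e.\ derivative bound to a genuine Lipschitz estimate), but these are refinements of the same argument rather than a different one.
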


\begin{proof}
From the periodicity of $F_\lambda$ and the fact that $F_\lambda$ maps $\{(x,\pm x):x\in\R\}$ into itself, it follows that $F_\lambda(\partial W(\q))$ is disjoint from $W(\p)$ for all $\q\in P$. Hence we must have that the component $V\subseteq W(\q)$ for some $\q\in P$. By Lemma~\ref{lem:Sq}, $F_\lambda$ has an inverse branch $S_\q:W(\p)\to W(\q)$. 
For ${\bf y}\in W(\p)$, writing $\x=S_\q({\bf y})$ and using Lemma~\ref{lem:l(DF)} yields
\[ \|DS_\q({\bf y})\| = \frac{1}{l(DF_\lambda(\x))} \le \frac{\sqrt{2}}{\lambda}. \]
Noting that $W(\p)$ is convex, we deduce from this that if ${\bf v}_1,{\bf v}_2\in V$, then
\[ \|{\bf v}_1-{\bf v}_2\| = \|S_\q(F_\lambda({\bf v}_1)) - S_\q(F_\lambda({\bf v}_2))\| \le \frac{\sqrt{2}}{\lambda}\|F_\lambda({\bf v}_1) - F_\lambda({\bf v}_2)\| \le \frac{\sqrt{2}}{\lambda}\diam U. \qedhere \]
\end{proof}

\section{Itineraries on the escaping set}\label{sect:itineraries}

We define
\[ \Pi = \{(\p_n)_{n=0}^\infty:\p_n\in P \mbox{ and } \p_n\to\infty \}, \]
so that $\Pi$ is the set of sequences of poles that tend to infinity. The next lemma can be viewed as defining a notion of \emph{itineraries} on the escaping set.

\begin{lemma}\label{itineraries}
For any $\lambda>0$, we can define a one-to-one mapping $\Phi:I(T_\lambda)\to\Pi$ by
\[ \Phi(\x) = (\p_n)_{n=0}^\infty, \]
where $\p_n\in P$ is chosen so that $T_\lambda^n(\x)\in W(\p_n)$.
\end{lemma}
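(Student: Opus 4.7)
The plan is to verify three things: that $\Phi$ is well-defined as a map $I(T_\lambda)\to P^{\N}$, that the image sequence always lies in $\Pi$, and that $\Phi$ is injective. Well-definedness is almost free from the preceding section: for any $\x\in I(T_\lambda)$ and any $n\ge 0$, the point $T_\lambda^n(\x)$ is itself in $I(T_\lambda)$ (it escapes and never lands at $\infty$), so by \eqref{I in UWp} it lies in a unique open cell $W(\p_n)$; hence $\p_n\in P$ is well-defined. Since $W(\p)\subseteq B^2(\p,\pi/2)$ for every $\p\in P$, one has $\|\p_n\|\ge\|T_\lambda^n(\x)\|-\pi/2\to\infty$, so the sequence belongs to $\Pi$.

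For injectivity, I would combine the contraction of inverse branches for distant poles (Lemma~\ref{lem:Sq}) with the expansion of $F_\lambda$ near each pole (Lemma~\ref{expanding on B}). Take $\varepsilon\in(0,\pi/4)$ as in Lemma~\ref{expanding on B}, and use Lemma~\ref{lem:Sq} to choose $R>0$ such that $S_\q(W(\p))\subseteq B^2(\q,\varepsilon)$ whenever $\|\p\|>R$ and $\q\in P$. Since $\p_n\to\infty$, there is $N$ with $\|\p_n\|>R$ for all $n\ge N$. For each such $n$, applying the inverse branch $S_{\p_{n-1}}$ to the inclusion $T_\lambda^n(\x)\in W(\p_n)$ gives $T_\lambda^{n-1}(\x)\in B^2(\p_{n-1},\varepsilon)$, and the same argument with $\y$ in place of $\x$ applies because $\Phi(\y)=\Phi(\x)$.

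Iterating Lemma~\ref{expanding on B} along the shared itinerary then yields
\[ \|T_\lambda^{N+k}(\x)-T_\lambda^{N+k}(\y)\|\ge 2^k\,\|T_\lambda^N(\x)-T_\lambda^N(\y)\| \quad\text{for all } k\ge0, \]
while each left-hand side is bounded above by $2\varepsilon$ because both points lie in $B^2(\p_{N+k},\varepsilon)$. This forces $T_\lambda^N(\x)=T_\lambda^N(\y)$, and the injectivity of $F_\lambda$ on each cell (also from Lemma~\ref{lem:Sq}) propagates the equality back through $n=N-1,N-2,\ldots,0$ to conclude $\x=\y$.

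The main obstacle I anticipate is executing the injectivity step cleanly: one must ensure that both orbits are simultaneously pulled into the expansion balls $B^2(\p_n,\varepsilon)$ from index $N$ onwards, so that the forward expansion can be iterated without hypotheses failing along the way. Once this is set up, the incompatibility of exponential separation with residence in a fixed $\varepsilon$-ball around a pole provides the contradiction. Well-definedness and the claim $(\p_n)\in\Pi$ are essentially routine consequences of \eqref{I in UWp} and the diameter bound on cells.
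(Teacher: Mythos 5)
Your proof is correct as a proof of injectivity, and the argument is sound: pulling both orbits into $\varepsilon$-balls around the poles via Lemma~\ref{lem:Sq}, then forcing a contradiction between exponential separation (Lemma~\ref{expanding on B}) and the uniform $2\varepsilon$ bound gives $T_\lambda^N(\x)=T_\lambda^N(\y)$, and the local injectivity of $F_\lambda$ on each $W(\p)$ then propagates this back to $\x=\y$. Well-definedness and membership of the itinerary in $\Pi$ are handled correctly.

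The paper's proof, however, takes a genuinely different and strictly stronger route. Instead of proving injectivity directly, it fixes an arbitrary sequence $(\p_n)\in\Pi$ and builds the nested compact sets $Y_j = S_{\q_0}\circ\ldots\circ S_{\q_j}(\overline{W(\q_{j+1})})$, using the same expansion estimate to show $\diam Y_j \le 2^{1-j}\varepsilon\to 0$, so that $\bigcap_j Y_j$ is a single point realizing the given itinerary. This establishes that $\Phi$ is a bijection, not merely an injection, and introduces a construction that is re-used verbatim later. The stronger conclusion is not cosmetic: the proof of Theorem~\ref{thm:O-(infty)} infers the uncountability of $I(T_\lambda)$ from the uncountability of $\Pi$, which requires surjectivity of $\Phi$, and both that proof and the proof of Lemma~\ref{lem:notconn} invoke the sets $Y_j$ directly. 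So while your argument proves exactly what the lemma's wording says, it is not a drop-in replacement for the paper's proof; you would need to supplement it with the existence (surjectivity) argument --- essentially the nested-compact-set construction --- to support the subsequent development.
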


\begin{proof}
We first show that the mapping $\Phi$ is well-defined. Let $\x\in I(T_\lambda)$ and let $n\ge0$ be an integer. Then by \eqref{I in UWp} and the forward invariance of $I(T_\lambda)$, there exists a unique $\p_n\in P$ such that $T_\lambda^n(\x)\in W(\p_n)$. Since $\|T_\lambda^n(\x)-\p_n\|<\pi/2$ and $\x\in I(T_\lambda)$, it follows that $\p_n\to\infty$ as $n\to\infty$.

It remains to show that, given any sequence $(\p_n)\in\Pi$, there exists a unique $\x\in I(T_\lambda)$ such that $\Phi(\x)=(\p_n)$.

Suppose that $(\p_n)\in\Pi$ and let $\varepsilon\in(0,\pi/4)$ be as given by Lemma~\ref{expanding on B}. Let $R>0$ be given by Lemma~\ref{lem:Sq}, except that we increase $R$ if necessary to ensure that $\overline{W(\p)}$ is contained in the domain of $S_\q$ whenever $\|\p\|>R$.

Since $(\p_n)$ tends to infinity, there exists $N$ such that $\|\p_n\|>R$ for all $n\ge N$. For $j\ge0$, write $\q_j=\p_{N+j}$ and define
\[ Y_j = S_{\q_0}\circ\ldots\circ S_{\q_j}(\overline{W(\q_{j+1})}). \]
The $Y_j$ form a nested sequence of non-empty compact sets. By Lemma~\ref{lem:Sq} we have that
\eqn F_\lambda^j(Y_j)=S_{\q_j}(\overline{W(\q_{j+1})})\subseteq\overline{B^2(\q_j,\varepsilon)}. \label{FjYj in B(q,e)} \eqnend
Using this and the fact that the $Y_j$ are nested, we find that, if $0\le k\le j$ and ${\bf a}\in Y_j$, then
\[ F_\lambda^k({\bf a}) \in F_\lambda^k(Y_k) \subseteq \overline{B^2(\q_k,\varepsilon)}. \]
Hence for any pair ${\bf a},{\bf b}\in Y_j$, Lemma \ref{expanding on B} yields that
\begin{eqnarray*}
2^j\|{\bf a}-{\bf b}\| &\le& \|F_\lambda^j({\bf a}) - F_\lambda^j({\bf b})\| \\
&\le& \diam F_\lambda^j(Y_j) \le 2\varepsilon, 
\end{eqnarray*}
where the final inequality here again uses \eqref{FjYj in B(q,e)}. It follows that
\[ \diam Y_j \le 2^{1-j}\varepsilon \to 0 \quad \mbox{as } j\to\infty. \]
We deduce that the intersection of all the sets $Y_j$ contains exactly one point, which we denote by $\x_N$. By construction, and by considering Lemma~\ref{lem:Sq}, we see that $\x_N$ is the unique point such that $F_\lambda^j(\x_N)\in W(\p_{N+j})$ for all $j\ge0$. Finally, we set
\[ \x = S_{\p_0}\circ\ldots\circ S_{\p_{N-1}}(\x_N) \]
and hence $\x$ is the unique point such that $T_\lambda^n(\x)\in W(\p_n)$ for all $n\ge0$. Therefore, $\x\in I(T_\lambda)$ and $\x$ is the unique point satisfying $\Phi(\x)=(\p_n)$, as required.
\end{proof}

\subsection{Proof of Theorem \ref{thm:O-(infty)}}

It is clear from the definition that the set $\Pi$ is uncountable, and so Lemma~\ref{itineraries} shows that the escaping set $I(T_\lambda)$ must also be uncountable. Since $I(T_\lambda)$ is non-empty it must be unbounded and so, by backward invariance, it follows immediately that $O^{-}(\infty)\subseteq\overline{I(T_\lambda)}$.

To prove that $I(T_\lambda)\subseteq\overline{O^{-}(\infty)}$, we let $\x\in I(T_\lambda)$ and take $(\p_n)=\Phi(\x)$. Let $\q_j$, $Y_j$ and $\x_N=T_\lambda^N(\x)$ be as in the proof of Lemma~\ref{itineraries}. By definition, each set $Y_j$ contains a pre-image under $T_\lambda^{j+1}$ of the pole $\q_{j+1}$. Since $\x_N\in\bigcap Y_j$ and $\diam Y_j\to 0$, it follows that $\x_N\in\overline{O^{-}(\infty)}$ and thus $\x\in\overline{O^{-}(\infty)}$ also. We may therefore conclude that $\overline{I(T_\lambda)}=\overline{O^{-}(\infty)}$.

The sets $I(T_\lambda)$ and $O^{-}(\infty)$ are disjoint by definition, and yet we have shown that $\overline{I(T_\lambda)}=\overline{O^{-}(\infty)}$. Using these two facts, it is not hard to show that neither set can have any isolated points.

Finally, we shall show that $I(T_\lambda)$ is totally disconnected. To this end, suppose that $\x$ and ${\bf y}$ belong to the same connected component of $I(T_\lambda)$. Then, by continuity, $T_\lambda^n(\x)$ and $T_\lambda^n({\bf y})$ lie in the same component of $I(T_\lambda)$ for all $n$. Hence \eqref{I in UWp} implies that $\Phi(\x)=\Phi({\bf y})$, because the pairwise disjoint sets $W(\p)$ are relatively open in $\R^2$. Therefore $\x={\bf y}$ by Lemma~\ref{itineraries}.

\subsection{Proof of Theorem \ref{thm:sqrt2}}\label{sect:6-2}

In view of Theorem~\ref{thm:O-(infty)}, our first aim is to show that $\overline{I(T_\lambda)}=\overline{O^{-}(\infty)}=\R^2$ whenever $\lambda>\sqrt{2}$. Suppose that this does not hold, so that there exist $\x\in\R^2$ and $\varepsilon>0$ such that $B(\x,\varepsilon)$ does not intersect $O^{-}(\infty)$. Write $U=B(\x,\varepsilon/2)$ and note that $T_\lambda^n$ is defined, and hence quasiregular, on $U$ for all $n$.
All quasiregular functions are open maps \cite[Theorem I.4.1]{R}, and hence the set $T_\lambda^n(U)$ is open for all $n$. Thus $T_\lambda^n(U)$ meets $W(\p_n)$ for some $\p_n\in P$, because the sets $W(\p)$ form a dense subset of the completely invariant plane $\R^2$. In other words, $U$ intersects some component $V_n$ of $F_\lambda^{-n}(W(\p_n))$. 

Lemma~\ref{lem:diams} shows that 
\[ \diam V_n \le \left(\frac{\sqrt{2}}{\lambda}\right)^n \diam W(\p_n) = \left(\frac{\sqrt{2}}{\lambda}\right)^n \pi,  \]
and so $\diam V_n\to0$ as $n\to\infty$, since $\lambda>\sqrt{2}$. Therefore, if $n$ is sufficiently large, then $V_n\subseteq B(\x,\varepsilon)$. This contradicts the assumption that $B(\x,\varepsilon)$ is disjoint from $O^{-}(\infty)$, because $V_n$ contains a point of $T_\lambda^{-n}(\p_n)$. Hence we have established the fact that $\overline{O^{-}(\infty)}=\R^2$ whenever $\lambda>\sqrt{2}$.

It remains to show that the constant $\sqrt{2}$ is sharp, in the sense that $\overline{I(T_\lambda)}\ne\R^2$ when $\lambda<\sqrt{2}$. We will actually prove somewhat more than this in Lemma~\ref{lem:petals} below. We show that if $\lambda<\sqrt{2}$, then the basin of attraction of the origin contains non-empty open regions when viewed as a subset of the $(x,y)$-plane $\R^2$. The fact that $I(T_\lambda)$ is not dense in $\R^2$ then follows immediately, because $I(T_\lambda)\subseteq\R^2$ by Theorem~\ref{thm:attr}.

We now fix a value $\lambda\in(0,\sqrt{2})$. For $0<\mu<1$, write 
\[ \tau_\mu(x)=\mu\tan x \]
 and let $\phi(\mu)$ denote the smallest positive fixed point of the function $\tau_\mu$. We remark that $\phi:(0,1)\to(0,\pi/2)$ is a continuous decreasing function. Recalling our convention that we identify the points $(x,y,0)=(x,y)$, we define $Q=Q_1\cup Q_2$ where
\[ Q_1 = \left\{(x,\alpha x) : \lambda^2-1<\alpha^2\le 1, \ |x|<\min\left\{\frac{\pi}{4},\phi\left(\frac{\lambda}{\sqrt{1+\alpha^2}}\right)\right\}\right\}, \]
\[ Q_2 = \left\{(\alpha y,y) : \lambda^2-1<\alpha^2\le 1, \ |y|<\min\left\{\frac{\pi}{4},\phi\left(\frac{\lambda}{\sqrt{1+\alpha^2}}\right)\right\}\right\}. \]
The set $Q$ is illustrated for several values of $\lambda$ in Figure~\ref{Fig1}. We may now state the result referred to in the previous paragraph.

\begin{figure}[t]%
\includegraphics[width=\columnwidth]{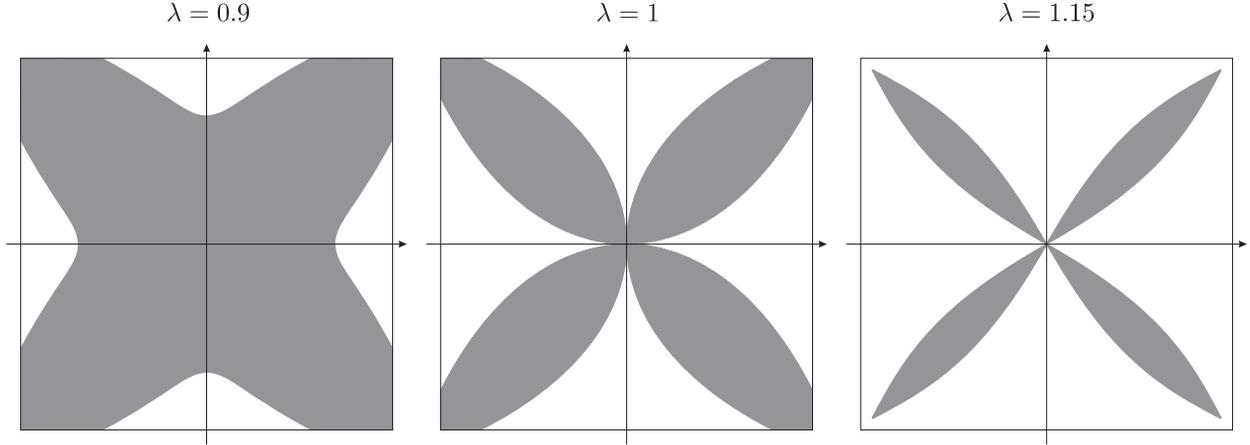}%
\caption{The set $Q$ is shown as the shaded subset of the square $[-\pi/4,\pi/4]^2$.}%
\label{Fig1}%
\end{figure}

\begin{lemma}\label{lem:petals}
For 
$0<\lambda<\sqrt{2}$
and $Q$ as defined above, we have $Q\subseteq\mathcal{A}(\0)$.
\end{lemma}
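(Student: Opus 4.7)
The plan is to reduce to a one-dimensional problem by exploiting the invariance of certain diagonal lines. Specifically, for $|\alpha|\le 1$, the lines $L_\alpha=\{(x,\alpha x):x\in\R\}$ and $L_\alpha'=\{(\alpha y,y):y\in\R\}$ are forward invariant under $F_\lambda$, and the restriction of $F_\lambda$ to such a line can be identified with the real tangent map $\tau_\mu$ for a specific $\mu=\mu(\alpha)$. Once this is seen, the lemma follows from the standard real analysis of $\tau_\mu$ in the regime $\mu<1$.

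For $(x,\alpha x)$ with $|\alpha|\le 1$ and $|x|\le\pi/4$, one has $M(x,\alpha x)=|x|$ and $\sqrt{x^2+\alpha^2x^2}=|x|\sqrt{1+\alpha^2}$. Substituting into \eqref{teq} at $z=0$ and using \eqref{defn F}, a routine simplification gives
\[ F_\lambda(x,\alpha x)=\bigl(\tau_\mu(x),\,\alpha\tau_\mu(x)\bigr) \quad\text{with}\quad \mu=\lambda/\sqrt{1+\alpha^2}. \]
This shows that $L_\alpha$ is $F_\lambda$-invariant and that the induced dynamics in the first coordinate is exactly $\tau_\mu$. The algebraic condition $\lambda^2-1<\alpha^2\le 1$ built into $Q_1$ is precisely the statement that $\mu<1$, so $0$ is an attracting fixed point of the one-dimensional map $\tau_\mu$. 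Since $\phi(\mu)$ is the smallest positive fixed point, standard monotonicity arguments (the sequence $\tau_\mu^n(x)$ is strictly decreasing on $(0,\phi(\mu))$ and bounded below by $0$, so converges to the only fixed point in $[0,x]$; symmetry covers negative $x$) show that $(-\phi(\mu),\phi(\mu))$ lies in the one-dimensional basin of $0$ under $\tau_\mu$.

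Combining these pieces, for $(x,\alpha x)\in Q_1$ the constraint $|x|<\min\{\pi/4,\phi(\mu)\}$ ensures both that the iterates $(\tau_\mu^n(x),\alpha\tau_\mu^n(x))$ remain in $X_0$ (so the explicit formula applies at every step) and that $\tau_\mu^n(x)\to 0$, giving $F_\lambda^n(x,\alpha x)\to\0$ and hence $Q_1\subseteq\mathcal{A}(\0)$. An identical argument on the lines $L_\alpha'$ (where now $M=|y|$) handles $Q_2$ via the $x\leftrightarrow y$ symmetry of $T$. The only mild obstacle is recognising the initial invariance: because $|\alpha|\le 1$ forces $M(x,\alpha x)=|x|$, the two nontrivial components of $T$ in \eqref{teq} share a common scalar factor $\tan|x|/\sqrt{1+\alpha^2}$, so the image is automatically of the form $(x',\alpha x')$; with this observation in hand, the entire three-dimensional problem collapses to the well-understood one-dimensional real tangent family.
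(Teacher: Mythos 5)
Your proposal is correct and follows essentially the same approach as the paper: both compute from \eqref{teq} that the diagonal lines are invariant and that $F_\lambda$ restricted to $L_\alpha$ acts as $\tau_\mu$ with $\mu=\lambda/\sqrt{1+\alpha^2}$, then use the one-dimensional dynamics of $\tau_\mu$ for $\mu<1$ on $(-\phi(\mu),\phi(\mu))$. The paper verifies $T_\lambda(Q_1)\subseteq Q_1$ explicitly where you instead observe that $|x|<\min\{\pi/4,\phi(\mu)\}$ keeps iterates in the region where the formula applies, but these amount to the same thing.
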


We briefly delay the proof of Lemma~\ref{lem:petals} to make the following remarks. Although by definition $Q$ contains the origin, the set $Q\setminus\{\0\}$ is non-empty and relatively open as a subset of $\R^2$. When $1\le\lambda<\sqrt{2}$, the origin lies on the boundary of $Q$. If instead $0<\lambda<1$, then $Q$ contains a relative neighbourhood of the origin (in this case we already know that the basin $\mathcal{A}(\0)$ is open by Theorem~\ref{thm:attr}).

\begin{proof}[Proof of Lemma \ref{lem:petals}]
We will just prove that $Q_1\subseteq\mathcal{A}(\0)$, as the proof for $Q_2$ is almost identical.

Take $(x,\alpha x)\in Q_1$ and let $\mu=\lambda/\sqrt{1+\alpha^2}$. Using \eqref{teq}, we find that
\eqn T_\lambda(x,\alpha x,0) = \left(\frac{\lambda\tan x}{\sqrt{1+\alpha^2}},\frac{\alpha\lambda\tan x}{\sqrt{1+\alpha^2}},0\right) = (\tau_\mu(x),\alpha\tau_\mu(x),0). \label{T(x,ax,0)} \eqnend
Since $(x,\alpha x)\in Q_1$, we have that $|x|<\phi(\mu)$. By considering the graph of $\tau_\mu$ and recalling the definition of $\phi(\mu)$, it follows that $|\tau_\mu(x)|<|x|$. Hence \eqref{T(x,ax,0)} shows that $T_\lambda(x,\alpha x,0)\in Q_1$, and we may thus deduce that $T_\lambda(Q_1)\subseteq Q_1$.

Furthermore, for $(x,\alpha x)\in Q_1$, equation \eqref{T(x,ax,0)} implies that
\[ T_\lambda^n(x,\alpha x,0) = (\tau_\mu^n(x),\alpha\tau_\mu^n(x),0). \]
Using the fact that $|x|<\phi(\mu)$, it is not hard to see that $\tau_\mu^n(x)\to0$ as $n\to\infty$. Therefore we conclude that $(x,\alpha x,0)\in\mathcal{A}(\0)$, as required.
\end{proof}

\begin{figure}[p]%
\centering
\includegraphics[width=90mm]{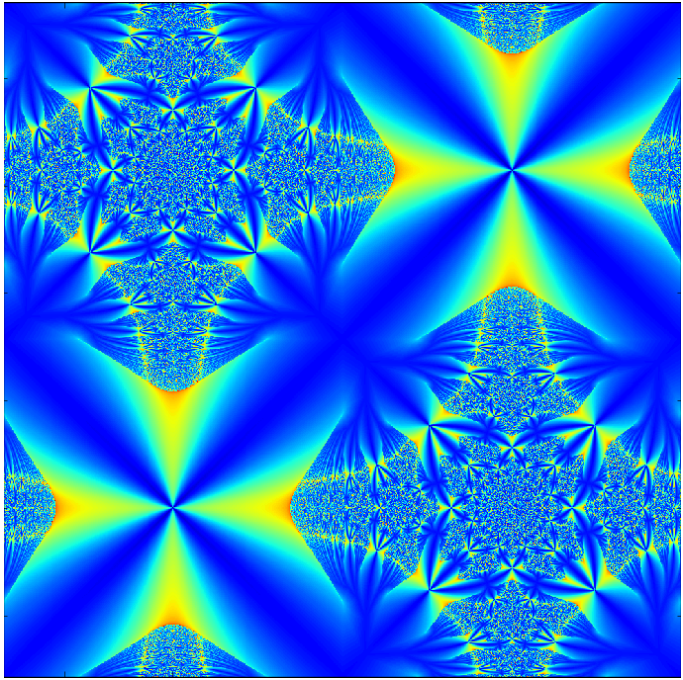}%
\caption{This diagram indicates $\mathcal{A}(\0)$ for $\lambda=0.9$ in $[-\pi/4, 3\pi/4]^2$.}%
\label{Fig2}%
\end{figure}

\begin{figure}[p]%
\centering
\includegraphics[width=90mm]{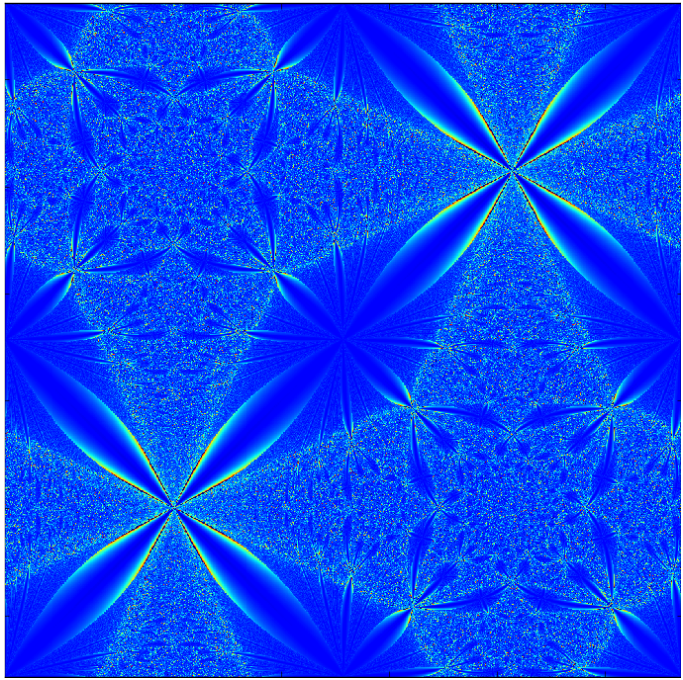}%
\caption{This diagram indicates $\mathcal{A}(\0)$ for $\lambda=1.1107$ in $[-\pi/4, 3\pi/4]^2$. This choice of $\lambda$ is approximately $\pi\sqrt{2}/4$, at which value the tips of the region $Q$ just meet the corners of the square $[-\pi/4,\pi/4]^2$.}
\label{Fig3}%
\end{figure}

\begin{remarks}
\begin{enumerate}
	\item By using \eqref{T(x,ax,0)} and a similar equation for $Q_2$, it can be shown that those points in $(-\pi/4,\pi/4)^2$ that lie on the boundary of $Q$ are fixed points of $T_\lambda$. This means that, when $\lambda\in(\pi/4,\sqrt{2})$, the mapping $T_\lambda$ has a curve of fixed points in the $(x,y)$-plane. (When $\lambda\in(0,\pi/4]$, we find that $Q=(-\pi/4,\pi/4)^2$.)
	\item We mentioned the following questions in Section~\ref{sect:results}: are there values of $\lambda$ for which $\overline{I(T_\lambda)}$ is locally a Cantor set, or for which $\overline{I(T_\lambda)}$ and $\mathcal{A}(\0)$ form a partition of~$\R^3$? When $\lambda\in(\pi/4,\sqrt{2})$, there exists a curve of fixed points of $T_\lambda$. Thus, for these values of $\lambda$, the complement of $\mathcal{A}(\0)$ contains continua. In this case, at least one of the above questions must have a negative answer.	
	\item If $\lambda<1$, then the set $\mathcal{A}(\0)\cap\R^2$ is connected. This may be proved using the fact that $\mathcal{A}(\0)$ is open and contains the lines $\{(x,\pm x, 0):x\in\R\}$.
	\item Figures \ref{Fig2} and \ref{Fig3} show $\mathcal{A}(\0) \cap \R^2$ in the region $[-\pi/4, 3\pi/4]^2$ for $\lambda = 0.9$ and $\lambda = 1.1107$. In these figures, the darker blue regions consist of points which iterate close to $\0$ after few iterations, whereas lighter yellow and red regions indicate that more iterations are needed to get close to $\0$. Since the escaping set is totally disconnected, it is more challenging to compute images which show the escaping set. However, $\mathcal{A}(\0)$ is more amenable to producing visually interesting images; in particular, the region $Q$ is clearly visible in these figures.
Further, Theorem~\ref{thm:connlocus} implies that $\overline{I(T_{\lambda})}$ is not connected in Figure~\ref{Fig2}, but is connected in Figure~\ref{Fig3}.
\end{enumerate}
\end{remarks}

\section{Connectedness of $\overline{O^{-}(\infty)}$}\label{sect:conn}

Recall that Theorem~\ref{thm:O-(infty)} states that $\overline{I(T_\lambda)}=\overline{O^{-}(\infty)}$ for any $\lambda>0$. In this section, we prove Theorem~\ref{thm:connlocus} about the connectedness of this set. In particular, we find that the connectedness locus $\{ \lambda>0 : \overline{I(T_{\lambda})}$ is connected$\}$ is itself connected (see also \cite[Theorem~5.2]{KK}).

The proof of Theorem \ref{thm:connlocus} is contained in the following two results.
If $\lambda \geq 1$, then Lemma~\ref{lem:conn} shows that $\overline{I(T_{\lambda})}=\overline{O^{-}(\infty)}$ is connected. On the other hand, Lemma~\ref{lem:notconn} shows that if $\lambda <1$, then 
$\overline{I(T_{\lambda})}$ contains singleton components. In particular, in this case $\overline{I(T_{\lambda})}$ is not connected.

It is useful to let $L$ be the set of lines $\{(x,\pm x+k\pi,0):x\in\R, k\in\Z\}$. It can be shown that
\eqn T_\lambda(L) = \{(x,\pm x, 0):|x|\le \lambda/\sqrt{2}\} \subseteq L. \label{eqn:L} \eqnend

\begin{lemma}\label{lem:conn}
Let $\lambda \geq 1$. Then $\overline{ O^-(\infty)}$ is connected.
\end{lemma}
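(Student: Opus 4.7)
My approach is to exhibit a connected subset of $\overline{O^{-}(\infty)}$ whose iterated $T_\lambda$-preimages both remain connected and together exhaust $\overline{O^{-}(\infty)}$. Define the grid
\[ G = \bigcup_{k\in\Z}\Bigl(\{(x,k\pi/2,0):x\in\R\} \cup \{(k\pi/2,y,0):y\in\R\}\Bigr) \]
inside the invariant plane $\R^2=\{z=0\}$. Any horizontal grid line meets any vertical grid line, so $G$ is path-connected; moreover every pole of $T_\lambda$ and every zero of $T$ lies at a grid intersection, so $P\subseteq G$.

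The first step is to show $G\subseteq\overline{O^{-}(T_\lambda)}$ for $\lambda\ge 1$. The restriction of $T_\lambda$ to the $x$-axis is precisely $\tau_\lambda(x)=\lambda\tan x$, via the embedded tangent in the $(x,z)$-plane, so Theorem~A gives $\overline{O^{-}(\tau_\lambda)}=\R$ and the $x$-axis lies in $\overline{O^{-}(T_\lambda)}$; similarly for the $y$-axis. A short calculation with the reflection-and-inversion construction of $T$ yields $T(x,\pi/2,0)=(\cot x,0,0)$, so $\{y=\pi/2\}$ maps into the $x$-axis; combined with the $\pi$-periodicity of $T$ and the analogous statement for vertical grid lines this gives $T_\lambda(G)\subseteq G$. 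By backward invariance of $\overline{O^{-}(\infty)}$, every set $L_n:=T_\lambda^{-n}(G)$ lies in $\overline{O^{-}(\infty)}$, and since $T_\lambda^{-n}(\infty)=T_\lambda^{-(n-1)}(P)\subseteq L_{n-1}$ gives the reverse inclusion, we conclude
\[ \overline{O^{-}(T_\lambda)}=\overline{\textstyle\bigcup_{n\ge 0} L_n}. \]

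The heart of the proof is the inductive claim that both $L_n$ and $L_n\cap D$ are connected for every $n\ge 0$, where $D=(\R^2\cup\{\infty\})\setminus\{(x,\pm x,0):|x|\le\lambda/\sqrt{2}\}$ is as in Lemma~\ref{lem:Sq}. Using the inverse branches $S_\q:D\to W(\q)$ we have
\[ L_{n+1}=L_n\cup\bigcup_{\q\in P}S_\q(L_n\cap D), \]
since the only preimages of $L_n$ on $L=\bigcup_\p\partial W(\p)$ are zeros of $T$, which already lie in $G\subseteq L_n$. Each $S_\q(L_n\cap D)$ is connected (a continuous image of a connected set) and meets $L_n$ in the ``cross'' $G\cap W(\q)$, because $T_\lambda$ sends this cross into $G\subseteq L_n$. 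Hence $L_{n+1}$ is the union of the connected set $L_n$ with connected pieces each meeting $L_n$, and so is itself connected. The removed set $X=\{(x,\pm x,0):|x|\le\lambda/\sqrt{2}\}$ lies in $L$ and is thus disjoint from every open diamond $W(\q)$, so the same joining argument preserves connectedness of $L_{n+1}\cap D$.

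The main obstacle is the non-trivial base case $L_0\cap D=G\setminus X$. For $\lambda\le\sqrt{2}$, which is the only range of interest in view of Theorem~\ref{thm:sqrt2}, a direct check gives $G\cap X=\{\0\}$, so $L_0\cap D=G\setminus\{\0\}$, and I must verify that removing the origin does not disconnect $G$; this holds because any two of the four rays of $G$ meeting at $\0$ can be rejoined through neighbouring grid intersections such as $(\pm\pi/2,\pm\pi/2,0)$. With the base case in place, every $K_n:=\overline{L_n}$ is connected and contains $G$, so $\overline{O^{-}(T_\lambda)}$ is connected as the closure of a nested union of connected sets sharing the common subset $G$.
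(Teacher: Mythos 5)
Your overall strategy -- take the grid $G$ as a connected seed set inside $\overline{O^-(\infty)}$, pull it back by $T_\lambda$, and show by induction that the preimage sets $L_n$ stay connected -- is a genuinely different route from the paper's. The paper instead fixes the component $C$ of $\overline{O^-(\infty)}$ that contains the grid lines and proves $T_\lambda^{-n}(\infty)\subseteq C$ by a pointwise induction, connecting each $\x\in T_\lambda^{-(n+1)}(\infty)$ to a point of $T_\lambda^{-n}(\infty)$ via $S_{\p_0}\circ\cdots\circ S_{\p_{n-1}}(\Gamma\cup\{\infty\})$ for a path $\Gamma$ in $C$ joining the pole $T_\lambda^n(\x)$ to $\infty$. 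Your preliminary facts (the $x$- and $y$-axes lie in $\overline{O^-(\infty)}$ via the embedded tangent, $T(x,\pi/2,0)=(\cot x,0,0)$, hence $T_\lambda(G)\subseteq G$ and $G\subseteq\overline{O^-(\infty)}$) are correct, as is the reduction of the hard case to $\lambda\le\sqrt{2}$ via Theorem~\ref{thm:sqrt2}.

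There is, however, a gap in the inductive step. The sets $L_n=T_\lambda^{-n}(G)$ are \emph{not} nested: a pole $\q$ lies in $G=L_0$ but $T_\lambda(\q)=\infty\notin G$, so $\q\notin L_1$, and more generally $T_\lambda^{-n}(P)\subseteq L_n\setminus L_{n+1}$. Consequently the decomposition $L_{n+1}=L_n\cup\bigcup_\q S_\q(L_n\cap D)$ is not an equality; the correct statement is $L_{n+1}=\{\text{zeros of }T\}\cup\bigcup_\q S_\q\bigl(L_n\cap(D\setminus\{\infty\})\bigr)$, and these zeros do not by themselves supply the connected set $L_n$ through which you propose to glue. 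For the same reason, the assertion ``$G\subseteq L_n$'' used to show each piece $S_\q(L_n\cap D)$ meets $L_n$ in the cross $G\cap W(\q)$ is false for $n\ge1$, and the phrase ``nested union'' at the end is inaccurate. The argument can be repaired -- each piece $S_\q(L_n\cap D)$ contains the open cross $(G\cap W(\q))\setminus\{\q\}$ minus finitely many pre-poles, so its closure reaches all four vertex zeros of $W(\q)$, and the zeros (which do lie in every $L_{n+1}$) glue the diamond pieces into a connected set; alternatively one can define the $L_n$ cumulatively, or pass to the closures $\overline{L_n}$, which \emph{are} nested and contain $G$. But as written the induction does not go through, and the fix is a non-trivial reworking of the gluing step rather than a cosmetic change.
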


\begin{proof}
Let $\lambda\ge1$ and observe that, by Theorem~A, the Julia set of the meromorphic function $\tau_\lambda(z)=\lambda\tan z$ is the real line. In our situation, since $T_{\lambda}$ contains embedded copies of $\tau_\lambda$, this means that the lines $\{x=0\}$ and $\{y=0\}$ in $\R^2$ are contained in $\overline{O^-(\infty)}$. Hence, by periodicity, all the translations of these lines by multiples of $\pi$ in the $x$ and $y$ driections are also contained in $\overline{O^-(\infty)}$. Denote by $C$ the component of $\overline{O^-(\infty)}$ containing all these lines, and note that $C$ is closed and contains all the poles of $T_{\lambda}$. We now aim to show that $O^-(\infty)\subseteq C$, as this will imply that $\overline{O^-(\infty)}$ is equal to $C$ and is therefore connected.

We need to prove that
\eqn T_\lambda^{-n}(\infty)\subseteq C \label{eqn:C} \eqnend
for all $n\in\N$. Note that this holds for $n=1$ because $C$ contains all the poles of $T_{\lambda}$. We proceed by induction, assuming now that \eqref{eqn:C} holds for some particular $n\in\N$.

Let $\x\in T_\lambda^{-(n+1)}(\infty)$ and choose poles $\p_j$ such that $T_\lambda^j(\x)\in W(\p_j)$ for $j=0,1,\ldots,n$; note that $\p_n=T_\lambda^n(\x)$. Such poles $\p_j$ exist because $\partial W(\p)\subseteq L$ for every pole $\p$, and $L$ is disjoint from $O^-(\infty)$ by \eqref{eqn:L}. Choose $\Gamma$ to be a path in $C$ that connects $\p_n$ to $\infty$ and does not intersect $\{(x,\pm x, 0):|x|\le \lambda/\sqrt{2}\}$. Appealing to Lemma~\ref{lem:Sq} and the backward invariance of $\overline{O^-(\infty)}$, we find that
\[ S_{\p_0} \circ \ldots \circ S_{\p_{n-1}} (\Gamma\cup\{\infty\}) \]
is a connected subset of $\overline{O^-(\infty)}$ that contains both $\x$ and the point
\[ S_{\p_0} \circ \ldots \circ S_{\p_{n-1}} (\infty) \in T_\lambda^{-n}(\infty). \]
This latter point lies in $C$ by the induction hypothesis \eqref{eqn:C}. We deduce that $\x\in C$, which completes the induction.
\end{proof}

\begin{lemma}\label{lem:notconn}
Suppose that $0<\lambda<1$. If either ${\bf v}\in O^-(\infty)$ or ${\bf v} \in I(f)$, then $\{ {\bf v}\}$ is a component of $\overline{O^{-}(\infty)}$.
\end{lemma}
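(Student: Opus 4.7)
The plan is to produce, for each $\mathbf{v}\in O^-(\infty)\cup I(T_\lambda)$, a sequence of open neighbourhoods $U_k$ of $\mathbf{v}$ with $\partial U_k\subseteq\mathcal{A}(\0)$ and $\diam U_k\to0$. Since $\mathcal{A}(\0)$ is open (Proposition~\ref{prop:attr} gives $\0$ strictly attracting when $\lambda<1$) and disjoint from $O^-(\infty)$, any such $\partial U_k$ is disjoint from $\overline{O^-(\infty)}$; thus each $U_k\cap\overline{O^-(\infty)}$ is clopen in $\overline{O^-(\infty)}$, and because $\overline{O^-(\infty)}$ is closed in $\R^2$ (hence locally compact, so quasi-components coincide with components), it follows that $\{\mathbf{v}\}$ is a component.

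The geometric input is that $L\subseteq\mathcal{A}(\0)$. Equation \eqref{eqn:L} places $T_\lambda(L)$ in the bounded subset $\{(x,\pm x,0):|x|\le\lambda/\sqrt 2\}$ of $L$, and \eqref{teq} shows that on the diagonals $T_\lambda$ acts as $x\mapsto\lambda\tan(x)/\sqrt 2$. Since $\lambda/\sqrt 2<\pi/4$ when $\lambda<1$, the smallest positive fixed point of this one-dimensional map exceeds $\lambda/\sqrt 2$, so every orbit in $L$ iterates to $\0$. Complete invariance of $\mathcal{A}(\0)$ under $T_\lambda$ then gives $T_\lambda^{-k}(L)\subseteq\mathcal{A}(\0)$ for all $k\ge 0$.

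For $\mathbf{v}\in I(T_\lambda)$ with itinerary $(\mathbf{p}_n)=\Phi(\mathbf{v})$, I take $U_n$ to be the component of $(T_\lambda^n)^{-1}(W(\mathbf{p}_n))$ containing $\mathbf{v}$. Then $\partial U_n\subseteq T_\lambda^{-n}(\partial W(\mathbf{p}_n))\subseteq T_\lambda^{-n}(L)\subseteq \mathcal{A}(\0)$, and the expansion argument from the proof of Lemma~\ref{itineraries} yields $\diam U_n\to0$.

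For $\mathbf{v}\in O^-(\infty)$ the main obstacle is constructing an appropriate shrinking basis of neighbourhoods of the pole $\mathbf{p}:=T_\lambda^{n-1}(\mathbf{v})$ (where $n\ge1$ is minimal with $T_\lambda^n(\mathbf{v})=\infty$) whose boundaries lie in $L$; a basis of round balls at $\mathbf{p}$ will not have this property. The trick is to use the closed diamonds $E_k=\{(x,y):|x|+|y|\le k\pi\}$, whose boundaries $\partial E_k$ are unions of four line segments of $L$; since $E_k$ contains the excluded set $\{(x,\pm x):|x|\le\lambda/\sqrt 2\}$ for every $k\ge1$, the inverse branch $S_\mathbf{p}$ from Lemma~\ref{lem:Sq} is defined on $(\R^2\cup\{\infty\})\setminus E_k$. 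Setting $V_k:=S_\mathbf{p}\bigl((\R^2\cup\{\infty\})\setminus E_k\bigr)$ yields an open neighbourhood of $\mathbf{p}=S_\mathbf{p}(\infty)$ with $\partial V_k=S_\mathbf{p}(\partial E_k)\subseteq T_\lambda^{-1}(L)\subseteq\mathcal{A}(\0)$ and $V_k\searrow\{\mathbf{p}\}$ by continuity of $S_\mathbf{p}$ at $\infty$. Finally, $U_k$ is the component of $(T_\lambda^{n-1})^{-1}(V_k)$ containing $\mathbf{v}$: continuity and discreteness of the quasiregular map $T_\lambda^{n-1}$ at $\mathbf{v}$ (whose orbit first hits a pole at time $n-1$) force $\diam U_k\to0$, while complete invariance of $\mathcal{A}(\0)$ again gives $\partial U_k\subseteq\mathcal{A}(\0)$.
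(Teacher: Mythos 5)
Your overall scheme (exhibit a shrinking basis of neighbourhoods $U_k$ of $\mathbf{v}$ with $\partial U_k \subseteq \mathcal{A}(\0)$, an open set disjoint from $\overline{O^-(\infty)}$, so that $U_k\cap\overline{O^-(\infty)}$ is relatively clopen) is sound, and for $\mathbf{v}\in I(T_\lambda)$ it coincides with the paper's argument: your $U_n$ is, for $n$ large, exactly the paper's nested set $E_n = S_{\mathbf{p}_0}\circ\cdots\circ S_{\mathbf{p}_n}(\overline{W(\mathbf{p}_{n+1})})$, and the inclusion $\partial U_n\subseteq T_\lambda^{-n}(\partial W(\mathbf{p}_n))$ is then clean because the composition of inverse branches is a homeomorphism onto its image. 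You should, however, explicitly restrict to large $n$ (so that $\|\mathbf{p}_n\|$ is large enough for $\overline{W(\mathbf{p}_n)}$ to lie in the domain of $S_{\mathbf{p}_{n-1}}$), as the paper does. Your justification that $L\subseteq\mathcal{A}(\0)$ via the one-dimensional map $x\mapsto\lambda\tan(x)/\sqrt{2}$ on the diagonal is a correct direct argument; the paper instead quotes Lemma \ref{lem:petals} and $\lambda/\sqrt{2}<\pi/4<\phi(\lambda/\sqrt{2})$, which comes to the same thing.

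For $\mathbf{v}\in O^-(\infty)$ your route is genuinely different. The paper reduces by discreteness to the case of a pole and then argues topologically: the image $T_\lambda(V)$ of the $\mathbf{v}$-component $V$ of $\overline{O^-(\infty)}$ is connected, contains $\infty$, and is disjoint from every diamond path $\gamma_n\subseteq L\subseteq\mathcal{A}(\0)$, so $T_\lambda(V)\subseteq D_n\cup\{\infty\}$ for all $n$, forcing $T_\lambda(V)=\{\infty\}$ and hence $V=\{\mathbf{v}\}$. You instead build explicit shrinking neighbourhoods $V_k=S_{\mathbf{p}}\bigl((\R^2\cup\{\infty\})\setminus E_k\bigr)$ of the pole $\mathbf{p}$ and pull back by $T_\lambda^{n-1}$. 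This has the appeal of placing both cases of the lemma in the same framework, but it leans on the normal-neighbourhood theory of quasiregular maps to guarantee both $\diam U_k\to 0$ and that stray poles of lower iterates do not land on $\partial U_k$; the paper's diamond argument sidesteps that machinery and is shorter. Finally, the parenthetical claim that quasi-components agree with components because $\overline{O^-(\infty)}$ is merely locally compact is not a general theorem (it holds in compact Hausdorff spaces), but it is also unnecessary: once $\diam U_k\to 0$ and each $U_k\cap\overline{O^-(\infty)}$ is clopen, the $\mathbf{v}$-component is a connected set meeting every clopen $U_k\cap\overline{O^-(\infty)}$, hence contained in each of them, hence a singleton.
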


\begin{proof}
First suppose that ${\bf v} \in O^-(\infty)$.
The function $T_\lambda$ is quasiregular and all non-constant quasiregular mappings are discrete \cite[Theorem I.4.1]{R}. Hence, it will suffice to prove the result under the assumption that ${\bf v}$ is actually a pole of $T_\lambda$. Let $V$ denote the component of $\overline{O^{-}(\infty)}$ containing ${\bf v}$.

Lemma~\ref{lem:petals} and \eqref{eqn:L}  show that $T_\lambda(L)\subseteq\mathcal{A}(\0)$, by noting that $\lambda/\sqrt{2}<\pi/4<\phi(\lambda/\sqrt{2})$ whenever $0<\lambda<1$. Therefore, $L\subseteq\mathcal{A}(\0)$.

For $n\in\N$, let $\gamma_n\subseteq L$ be a path in the shape of a diamond with vertices at $(\pm n\pi,0,0)$ and $(0,\pm n\pi,0)$. Since $\gamma_n\subseteq L\subseteq \mathcal{A}(\0)$ and $\mathcal{A}(\0)$ is open and disjoint from $O^-(\infty)$, we deduce that $\gamma_n$ does not intersect $\overline{O^{-}(\infty)}$.

Let $D_n$ denote the unbounded component of $\R^2\setminus \gamma_n$. 
The image $T_\lambda(V)$ is contained in $\overline{O^{-}(\infty)}\cup\{\infty\}\subseteq \R^2\cup\{\infty\}$. As $T_\lambda(V)$ is connected and contains $\infty$, but is disjoint from $\gamma_n$, we must have that $T_\lambda(V)\subseteq D_n\cup\{\infty\}$. This holds for all $n$, and hence $T_\lambda(V)=\{\infty\}$. It follows that $V=\{{\bf v}\}$.

Next, suppose instead that ${\bf v} \in I(T_{\lambda})$. 
Let $({\bf p} _n)_{n=0}^{\infty} = \Phi ({\bf v})$ 
be the itinerary of ${\bf v}$.
From the proof of Lemma~\ref{itineraries} recall that, for large $n$,
\[ E_n = S_{{\bf p}_0} \circ \ldots \circ S_{{\bf p}_n} ( \overline{ W({\bf p }_{n+1})})\]
is a nested sequence of compact sets whose intersection consists only of ${\bf v}$. 
Using the first part of this proof,
we have that $\partial W({\bf p}) \subseteq L \subseteq \mathcal{A}(\0)$ for every pole ${\bf p}$.
Therefore, by complete invariance we have $\partial E_n \subseteq \mathcal{A}(\0)$ for all large $n$. 
Since $\mathcal{A}(\0)$ is open and disjoint from $O^-(\infty)$, this means that $\{ {\bf v} \}$ is a component of $\overline{O^-(\infty)}$.
\end{proof}

\section{Proof of Theorem \ref{thm:expanding}}\label{sect:short-proof}

Let $U$ and $E_m$ be as in the statement of the theorem. By Theorem~\ref{thm:O-(infty)} the set $U$ intersects $O^{-}(\infty)$. Hence, for some $m$, the image $T_\lambda^{m-1}(U\setminus E_m)$ contains a set of the form $\{\x\in\R^3:\|\x\|>M\}$ for some $M$. This set contains the beam
\[ \left[k\pi-\frac{\pi}{2},k\pi+\frac{\pi}{2}\right]^2\times\R, \]
for all large $k\in\N$. Therefore, by the periodicity of $T_\lambda$, 
\[ T_\lambda^m(U\setminus E_m) = T_\lambda(\R^3). \]
The proof is completed by recalling the definition \eqref{defn T} and noting that $Z(\R^3)=\R^3\setminus\{\0\}$, while the M\"{o}bius transformation $A$ maps $\R^3\setminus\{\0\}$ onto $\left(\R^3\cup\{\infty\}\right)\setminus\{(0,0,\pm 1)\}$.

\section{Density of periodic points}\label{sect:periodic}

The aim of this section is to provide a proof of Theorem~\ref{thm:periodic}. We fix $\lambda>0$ and, supposing that $\x_0\in I(T_\lambda)$ and $\eta>0$ are given, we seek a periodic point of $T_\lambda$ lying in $B^2(\x_0,\eta)$.

Let $(\p_n)=\Phi(\x_0)$ and write $\x_n=T_\lambda^n(\x_0)$. We will first find a periodic point $\y_0$ in $W(\p_0)$ with iterates that run through the sequence of sets $W(\p_1), W(\p_2), \ldots$  before returning to $\y_0$. By choosing a sufficiently long period for this cycle, we will then be able to show that this periodic point lies near to $\x_0$.

We begin by choosing constants similarly to the proof of Lemma~\ref{itineraries}. In particular, let $\varepsilon>0$ be as in Lemma~\ref{expanding on B} and then take $R$ as given by Lemma~\ref{lem:Sq}, except that we again increase $R$ to ensure that the domain of any inverse branch $S_\q$ contains the closure $\overline{W(\p)}$ for all $\|\p\|>R$. Since $(\p_n)$ tends to infinity, we may choose $N$ such that $\|\p_n\|>R$ whenever $n\ge N$.

We recall the important observation that each inverse branch $S_\q$ is defined on all of $W(\p)$ for every $\p\in P$. Moreover, the image of each $S_\q$ is contained in $W(\q)$ by definition. Using~\eqref{I in UWp}, it follows that the composition $S_{\p_0} \circ \ldots \circ S_{\p_{N-1}}$ is continuous at $\x_N\in I(T_\lambda)$. Hence there exists a large integer $M$ such that
\eqn \left( S_{\p_0}\circ\ldots \circ S_{\p_{N-1}}\right)\left(B^2(\x_N,2^{-M}\pi)\right) \subseteq B^2(\x_0,\eta). \label{eqn:B(x,n)} \eqnend

Next we consider a longer composition of inverse branches. Due to the choices made above, any branch $S_\q$ is defined on $\overline{W(\p_{N+M})}$ because $\|\p_{N+M}\|>R$. Hence we have a continuous function
\[ S_{\p_{N+M}} \circ S_{\p_{0}} \circ S_{\p_{1}} \circ \ldots \circ S_{\p_{N+M-1}} : \overline{W(\p_{N+M})} \to W(\p_{N+M}). \]
An application of the Brouwer Fixed Point Theorem yields a fixed point $\y_{N+M}\in W(\p_{N+M})$ of this composition. 
For $n=0,1,\ldots,N+M-1$, define
\[ \y_n = S_{\p_{n}} \circ \ldots \circ S_{\p_{N+M-1}}(\y_{N+M}), \]
so that $\{\y_0,\ldots,\y_{N+M}\}$ is a periodic cycle for $T_\lambda$ with $\y_n\in W(\p_n)$.

Using the fact that $\|\p_n\|>R$ for all $n\ge N$, the final part of Lemma~\ref{lem:Sq} now shows that in fact
\[ \x_n,\y_n\in B^2(\p_n,\varepsilon) \quad \mbox{for} \quad n\in\{N,\ldots,N+M-1\}, \]
because, for example, $\x_n=S_{\p_n}(\x_{n+1})$. Thus we may apply Lemma~\ref{expanding on B} to obtain
\[ \|\x_{n+1}-\y_{n+1}\| = \|T_\lambda(\x_n) - T_\lambda(\y_n)\| \ge 2\|\x_n-\y_n\|, \]
for $n\in\{N,\ldots,N+M-1\}$, from which we deduce that
\[ \|\x_N-\y_N\| \le 2^{-M}\|\x_{N+M}-\y_{N+M}\| < 2^{-M}\pi. \]
Combining this last line with \eqref{eqn:B(x,n)}, we now see that
\[ \y_0 = S_{\p_0} \circ \ldots \circ S_{\p_{N-1}}(\y_N) \in B^2(\x_0,\eta), \]
which completes the proof of Theorem~\ref{thm:periodic}.

\subsubsection*{Acknowledgment}

The authors wish to thank Dan Goodman for the code used in creating Figures \ref{Fig2} and \ref{Fig3}.

\end{document}